\documentclass[a4paper, 12pt]{amsart}

\usepackage{amsmath, amssymb, amscd, enumitem, nccmath, framed, stmaryrd, mathtools}

%%%%%%%%%%%%%%%%%%%%%%%%%%%%%%%%%%%%%%%%%%%%%%%%%%%%%%%%%%%%%%%%%%%%%%%%%%%%%%%%%%%%%
\usepackage{etoolbox}

% Modifications to amsart ToC-related macros...
\makeatletter
\let\old@tocline\@tocline
\let\section@tocline\@tocline
% Insert a dotted ToC-line for \subsection and \subsubsection only
\newcommand{\subsection@dotsep}{4.5}
\newcommand{\subsubsection@dotsep}{4.5}
\patchcmd{\@tocline}
  {\hfil}
  {\nobreak
     \leaders\hbox{$\m@th
        \mkern \subsection@dotsep mu\hbox{.}\mkern \subsection@dotsep mu$}\hfill
     \nobreak}{}{}
\let\subsection@tocline\@tocline
\let\@tocline\old@tocline

\patchcmd{\@tocline}
  {\hfil}
  {\nobreak
     \leaders\hbox{$\m@th
        \mkern \subsubsection@dotsep mu\hbox{.}\mkern \subsubsection@dotsep mu$}\hfill
     \nobreak}{}{}
\let\subsubsection@tocline\@tocline
\let\@tocline\old@tocline

\let\old@l@subsection\l@subsection
\let\old@l@subsubsection\l@subsubsection

\def\@tocwriteb#1#2#3{%
  \begingroup
    \@xp\def\csname #2@tocline\endcsname##1##2##3##4##5##6{%
      \ifnum##1>\c@tocdepth
      \else \sbox\z@{##5\let\indentlabel\@tochangmeasure##6}\fi}%
    \csname l@#2\endcsname{#1{\csname#2name\endcsname}{\@secnumber}{}}%
  \endgroup
  \addcontentsline{toc}{#2}%
    {\protect#1{\csname#2name\endcsname}{\@secnumber}{#3}}}%

% Handle section-specific indentation and number width of ToC-related entries
\newlength{\@tocsectionindent}
\newlength{\@tocsubsectionindent}
\newlength{\@tocsubsubsectionindent}
\newlength{\@tocsectionnumwidth}
\newlength{\@tocsubsectionnumwidth}
\newlength{\@tocsubsubsectionnumwidth}
\newcommand{\settocsectionnumwidth}[1]{\setlength{\@tocsectionnumwidth}{#1}}
\newcommand{\settocsubsectionnumwidth}[1]{\setlength{\@tocsubsectionnumwidth}{#1}}
\newcommand{\settocsubsubsectionnumwidth}[1]{\setlength{\@tocsubsubsectionnumwidth}{#1}}
\newcommand{\settocsectionindent}[1]{\setlength{\@tocsectionindent}{#1}}
\newcommand{\settocsubsectionindent}[1]{\setlength{\@tocsubsectionindent}{#1}}
\newcommand{\settocsubsubsectionindent}[1]{\setlength{\@tocsubsubsectionindent}{#1}}

% Handle section-specific formatting and vertical skip of ToC-related entries
% \@tocline{<level>}{<vspace>}{<indent>}{<numberwidth>}{<extra>}{<text>}{<pagenum>}
\renewcommand{\l@section}{\section@tocline{1}{\@tocsectionvskip}{\@tocsectionindent}{}{\@tocsectionformat}}%
\renewcommand{\l@subsection}{\subsection@tocline{2}{\@tocsubsectionvskip}{\@tocsubsectionindent}{}{\@tocsubsectionformat}}%
\renewcommand{\l@subsubsection}{\subsubsection@tocline{3}{\@tocsubsubsectionvskip}{\@tocsubsubsectionindent}{}{\@tocsubsubsectionformat}}%
\newcommand{\@tocsectionformat}{}
\newcommand{\@tocsubsectionformat}{}
\newcommand{\@tocsubsubsectionformat}{}
\expandafter\def\csname toc@1format\endcsname{\@tocsectionformat}
\expandafter\def\csname toc@2format\endcsname{\@tocsubsectionformat}
\expandafter\def\csname toc@3format\endcsname{\@tocsubsubsectionformat}
\newcommand{\settocsectionformat}[1]{\renewcommand{\@tocsectionformat}{#1}}
\newcommand{\settocsubsectionformat}[1]{\renewcommand{\@tocsubsectionformat}{#1}}
\newcommand{\settocsubsubsectionformat}[1]{\renewcommand{\@tocsubsubsectionformat}{#1}}
\newlength{\@tocsectionvskip}
\newcommand{\settocsectionvskip}[1]{\setlength{\@tocsectionvskip}{#1}}
\newlength{\@tocsubsectionvskip}
\newcommand{\settocsubsectionvskip}[1]{\setlength{\@tocsubsectionvskip}{#1}}
\newlength{\@tocsubsubsectionvskip}
\newcommand{\settocsubsubsectionvskip}[1]{\setlength{\@tocsubsubsectionvskip}{#1}}

% Adjust section-specific ToC-related macros to have a fixed-width numbering framework
\patchcmd{\tocsection}{\indentlabel}{\makebox[\@tocsectionnumwidth][l]}{}{}
\patchcmd{\tocsubsection}{\indentlabel}{\makebox[\@tocsubsectionnumwidth][l]}{}{}
\patchcmd{\tocsubsubsection}{\indentlabel}{\makebox[\@tocsubsubsectionnumwidth][l]}{}{}

% Allow for section-specific page numbering format of ToC-related entries
\newcommand{\@sectypepnumformat}{}
\renewcommand{\contentsline}[1]{%
  \expandafter\let\expandafter\@sectypepnumformat\csname @toc#1pnumformat\endcsname%
  \csname l@#1\endcsname}
\newcommand{\@tocsectionpnumformat}{}
\newcommand{\@tocsubsectionpnumformat}{}
\newcommand{\@tocsubsubsectionpnumformat}{}
\newcommand{\setsectionpnumformat}[1]{\renewcommand{\@tocsectionpnumformat}{#1}}
\newcommand{\setsubsectionpnumformat}[1]{\renewcommand{\@tocsubsectionpnumformat}{#1}}
\newcommand{\setsubsubsectionpnumformat}[1]{\renewcommand{\@tocsubsubsectionpnumformat}{#1}}
\renewcommand{\@tocpagenum}[1]{%
  \hfill {\mdseries\@sectypepnumformat #1}}

% Small correction to Appendix, since it's still a \section which should be handled differently
\let\oldappendix\appendix
\renewcommand{\appendix}{%
  \leavevmode\oldappendix%
  \addtocontents{toc}{%
    \protect\settowidth{\protect\@tocsectionnumwidth}{\protect\@tocsectionformat\sectionname\space}%
    \protect\addtolength{\protect\@tocsectionnumwidth}{2em}}%
}
\makeatother

% #1 (default is as required)

% #2

% #3
\makeatletter
\settocsectionnumwidth{2em}
\settocsubsectionnumwidth{2.5em}
\settocsubsubsectionnumwidth{3em}
\settocsectionindent{1pc}%
\settocsubsectionindent{\dimexpr\@tocsectionindent+\@tocsectionnumwidth}%
\settocsubsubsectionindent{\dimexpr\@tocsubsectionindent+\@tocsubsectionnumwidth}%
\makeatother

% #4 & #5
\settocsectionvskip{10pt}
\settocsubsectionvskip{0pt}
\settocsubsubsectionvskip{0pt}
    
% #6 & #7
% See #3

% #8

% #9
\settocsectionformat{\bfseries}
\settocsubsectionformat{\mdseries}
\settocsubsubsectionformat{\mdseries}
\setsectionpnumformat{\bfseries}
\setsubsectionpnumformat{\mdseries}
\setsubsubsectionpnumformat{\mdseries}

% #10
% Insert the following command inside your text where you want the ToC to have a page break

% #11
\let\oldtableofcontents\tableofcontents
\renewcommand{\tableofcontents}{%
  \vspace*{-\linespacing}% Default gap to top of CONTENTS is \linespacing.
  \oldtableofcontents}

%\setcounter{tocdepth}{3}
%%%%%%%%%%%%%%%%%%%%%%%%%%%%%%%%%%%%%%%%%%%%%%%%%%%%%%%%%%%%%%%%%%%%%%%%%%%%%%%%%%%%%%%%%%%%%%%%%%%%%%%%%%

%\usepackage[dvipdfmx]{graphicx, hyperref}
%\usepackage{pxjahyper}
%\hypersetup{
% colorlinks = true, 
% linkcolor=black, 
% citecolor=black
%}
%

\usepackage[tikz]{mdframed}
\usepackage[all]{xy}
\usepackage{multirow}

\usepackage{leftidx}

%%%%%%%%%%%%%%%%%%%%%%%%%%%%%%%%%%%%%%%%%%%%%%%%%%%%%%%%%%%%%%%%%%%%%%%%%%%%%%%%%%%%%%%%%%%%%%%%%%%

\topmargin=-15mm
\setlength{\textheight}{26.5cm}
\setlength{\textwidth}{17.3cm}
\oddsidemargin=-6mm
\evensidemargin=-6mm

\newtheorem{thm}{Theorem}[section] 
\newtheorem{lem}[thm]{Lemma}

\newtheorem{claim}[thm]{Claim}

\newcommand{\G}{\mathbb{G}}

\newcommand{\rmin}{{\rm min}}

\newcommand{\Aut}{{\rm Aut}}

\newcommand{\Hom}{{\rm Hom}}

\newcommand{\ds}{\displaystyle}
\newcommand{\Exp}{{\rm Exp}}
\newcommand{\GL}{{\rm GL}}
\newcommand{\id}{{\rm id}}
\newcommand{\inn}{{\rm inn}}

\newcommand{\Mat}{{\rm Mat}}

\newcommand{\Pic}{{\rm Pic}\,}

\newcommand{\red}{{\rm red}}

\newcommand{\Hopf}{{\rm Hopf}}
\newcommand{\alggr}{{\rm alg.gr}}
\newcommand{\gr}{{\rm gr}}

\newcommand{\PGL}{{\rm PGL}}

\newcommand{\A}{\mathbb{A}}
\newcommand{\E}{\mathbb{E}}
\newcommand{\cA}{\mathcal{A}}
\newcommand{\cB}{\mathcal{B}}
\newcommand{\cC}{\mathcal{C}}

\newcommand{\cN}{\mathcal{N}}

\newcommand{\Z}{\mathbb{Z}}

\newcommand{\circled}[1]{\raise0.1ex\hbox{\textcircled{\scriptsize{\raise0.2ex\hbox{#1}}}}}
\newcommand{\ouparrow}{\raise0.1ex\hbox{\textcircled{\scriptsize{\raise0.1ex\hbox{$\uparrow$}}}}}
\newcommand{\orightarrow}{ \raise0.1ex\hbox{\textcircled{\scriptsize{\raise0.23ex\hbox{\hspace{0.1mm}$\rightarrow$}}}} }

\allowdisplaybreaks[4]

\setcounter{section}{-1}
\setcounter{tocdepth}{4}
\setcounter{secnumdepth}{4}

%%%%%%%%%%%%%%%%%%%%%%%%%%%%%%%%%%%%%%%%%%%%%%%%%%%%%%%%%%%%%%%%%%%%%%%%%%%%%%%%%%

\makeatletter

\renewcommand\section{\@startsection{section}{1}%
  \z@{-.5\linespacing\@plus-.7\linespacing}{.5\linespacing}%
  {\bf \Large {\normalfont\scshape}}}

\renewcommand\subsection{\@startsection{subsection}{2}%
  \z@{-.5\linespacing\@plus-.7\linespacing}{.5\linespacing}%
  {\bf \large {\normalfont\scshape}}}

\renewcommand\subsubsection{\@startsection{subsubsection}{3}%
  \z@{.5\linespacing\@plus.7\linespacing}{.5\linespacing}%{-.5em}%
   {\bf  {\normalfont\scshape}}}
\makeatother

%%%%%%%%%%%%%%%%%%%%%%%%%%%%%%%%%%%%%%%%%%%%%%%%%%%%%%%%%%%%%%%%%%%

\makeatletter
\@namedef{subjclassname}{{\rm 2020} Mathematics Subject Classification}
\makeatother

\begin{document}

\title{%\fontsize{14pt}{0cm}\selectfont
Exponential matrices,  $\G_a$-actions on projective spaces 
and modular representations of elementary abelian $p$-groups
}
\subjclass[2020]{Primary 15A54, Secondary 14L30, 20C20}
% 15A54 - Matrices over function rings in one or more variables 
% 15A16 - Matrix exponential and similar functions of matrices
% 15A21 - Canonical forms, reductions, classification
% 14L30(1980–now) Group actions on varieties or schemes (quotients)
% 14R20(2000–now) Group actions on affine varieties
% 14J45(1991–now) Fano varieties
% 20C20 - Modular representations and characters
\keywords{Matrix theory, Algebraic geometry, Modular representation theory}
\author[Ryuji Tanimoto]{Ryuji Tanimoto}
\address{Faculty of Education, Shizuoka University, 836 Ohya, Suruga-ku, Shizuoka 422-8529, Japan} 
\email{tanimoto.ryuji@shizuoka.ac.jp}
\maketitle

\begin{abstract}
Let $k$ be an algebraically closed field of characteristic $p \, (\geq 0)$ and 
let $\G_a$ denote the additive group of $k$. 
In this article, we reveal the following: 

In the first, we show that there exist one-to-one correspondences among 
the following sets {\rm (a)}, {\rm (b)}, {\rm (c)}, {\rm (d)}, {\rm (e)}, {\rm (f)}: 
\begin{enumerate}[label = {\rm (\alph*)}]
\item the set $\Mat(n, k[T])^E$ of all exponential matrices of $\Mat(n, k[T])$; 

\item the set of all homomorphisms $h : k[\, \GL(n, k) \,] \to k[T]$ of Hopf algebras; 

\item the set of all homomorphisms $\varphi : \G_a \to \GL(n, k)$ of algebraic groups; 

\item the set of all homomorphisms $\theta : \G_a \to \PGL(n, k)$ of algebraic groups; 

\item the set of all homomorphisms $\vartheta : \G_a \to \Aut(\mathbb{P}^{n - 1})$ of algebraic groups; 

\item the set of all $\G_a$-actions $\mu$ on $\mathbb{P}^{n - 1}$. 
\end{enumerate} 
These correspondences are given up to equivalences.

In the second, assuming $p > 0$, we build a relationship between 
modular representations of elementary abelian $p$-groups 
and exponential matrices. 
We show that there exists a one-to-one correspondence between 
the following sets ($\alpha$) and ($\beta$): 
\begin{enumerate}

\item[($\alpha$)] the set of all group homomorphisms from $(\Z/p\Z)^r$ to $\GL(n, k)$, where $r$ ranges over all non-negative integers. 

\item[($\beta$)] the set $\Mat(n, k[T])^E \times \Z_{\geq 0}$. 
\end{enumerate}
\end{abstract}

%\tableofcontents
%

%Section 
\section{Introduction}

In this article, we investigate relationships among exponential matrices, 
$\G_a$-actions on projective spaces and 
modular representations of elementary abelian $p$-groups.  
These three objects belong to different reseach areas. 
At present, there seems no interactive result among these objects. 
We are in the following research situations: 

In characteristi zero, exponential matrices are classified (see \cite{Tanimoto 2019}). 
In positive characteristic, exponential matrices 
of size $n$-by-$n$ for $1 \leq n \leq 5$ are described (see \cite{Tanimoto 2019, Tanimoto 2020}).

In characteristic zero, 
$\G_a$-actions on the $n$-dimensional projective space are classified (see \cite{Gurjar-Masuda-Miyanishi}). 
Perhaps, in positive characteristic,  
we need to discover $p$-techniques for exploring $\G_a$-actions on projective spaces.

Modular representations of elemetary abelian $p$-groups are 
wild except for few cases (see \cite{Bondarenko-Drozd}). 
We refer to \cite{Benson} about various results of 
modular representations of elementary abelian $p$-groups.

Toward collaborative developments of 
exponential matrices, 
$\G_a$-actions on projective spaces 
and modular representations of elementary abelian $p$-groups, 
we build relationships among these objects. 
Our main results are the following Theorems 0.1 and 0.2, 
where $k$ is an algebraically closed field of characteristic $p \, (\geq 0)$ 
and $\G_a$ denotes the additive group of $k$:

\begin{thm} 
There exist one-to-one correspondences among 
the following sets {\rm (a)}, {\rm (b)}, {\rm (c)}, {\rm (d)}, {\rm (e)}, {\rm (f)}: 
\begin{enumerate}[label = {\rm (\alph*)}]
\item the set $\Mat(n, k[T])^E$ of all exponential matrices of $\Mat(n, k[T])$; 

\item the set of all homomorphisms $h : k[\, \GL(n, k) \,] \to k[T]$ of Hopf algebras; 

\item the set of all homomorphisms $\varphi : \G_a \to \GL(n, k)$ of algebraic groups; 

\item the set of all homomorphisms $\theta : \G_a \to \PGL(n, k)$ of algebraic groups; 

\item the set of all homomorphisms $\vartheta : \G_a \to \Aut(\mathbb{P}^{n - 1})$ of algebraic groups; 

\item the set of all $\G_a$-actions $\mu$ on $\mathbb{P}^{n - 1}$. 
\end{enumerate} 
These correspondences are given up to equivalences.
\end{thm}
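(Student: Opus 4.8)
The plan is to route every comparison through set (c), the algebraic-group homomorphisms $\varphi : \G_a \to \GL(n,k)$, and to build the two chains
\[
\text{(a)} \longleftrightarrow \text{(c)} \longleftrightarrow \text{(b)}, \qquad \text{(c)} \longleftrightarrow \text{(d)} \longleftrightarrow \text{(e)} \longleftrightarrow \text{(f)}.
\]
First I would dispose of (a)$\leftrightarrow$(c): writing $A(T) \in \Mat(n,k[T])$ for the matrix of regular functions attached to $\varphi$, the homomorphism axioms for $\varphi$ translate verbatim into the exponential-matrix conditions $A(0)=I_n$ and $A(S+T)=A(S)\,A(T)$ in $\Mat(n,k[S,T])$, and conversely; this is a tautological reformulation once one records that $k[\G_a]=k[T]$. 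For (b)$\leftrightarrow$(c) I would invoke the contravariant anti-equivalence between affine algebraic groups over $k$ and commutative Hopf $k$-algebras: sending $\varphi$ to the comorphism $\varphi^{\ast} : k[\GL(n,k)] \to k[T]$ is a bijection onto the Hopf-algebra homomorphisms, where $k[T]$ carries the comultiplication $T \mapsto T\otimes 1 + 1\otimes T$ dual to addition on $\G_a$.

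The right-hand chain is geometric. For (d)$\leftrightarrow$(e) I would use the classical identification $\Aut(\mathbb{P}^{n-1}) \cong \PGL(n,k)$ of algebraic groups (every automorphism of projective space is induced by a linear automorphism, unique up to scalar), under which a homomorphism $\vartheta$ is literally the same datum as a homomorphism $\theta$. For (e)$\leftrightarrow$(f) I would use that the automorphism functor of $\mathbb{P}^{n-1}$ is represented by $\PGL(n,k)$ together with its tautological action: a $\G_a$-action $\mu : \G_a \times \mathbb{P}^{n-1} \to \mathbb{P}^{n-1}$ is the same datum as the assignment $t \mapsto \mu(t,-)$, the action axioms $\mu(0,-)=\id$ and $\mu(s+t,-)=\mu(s,-)\circ\mu(t,-)$ being exactly the conditions that this be a homomorphism of algebraic groups into $\Aut(\mathbb{P}^{n-1})$.

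The crux is (c)$\leftrightarrow$(d), realised by post-composition with the quotient $\pi : \GL(n,k) \to \PGL(n,k) = \GL(n,k)/\G_m$, that is $\varphi \mapsto \pi\circ\varphi$. Injectivity is easy: two lifts $\varphi_1,\varphi_2$ of the same $\theta$ differ by the central homomorphism $t \mapsto \varphi_1(t)\varphi_2(t)^{-1} \in \G_m$, and since $\Hom(\G_a,\G_m)=0$ (the only units of $k[T]$ are nonzero constants, forcing such a homomorphism to be trivial), one gets $\varphi_1=\varphi_2$. The real content is surjectivity, namely that every $\theta : \G_a \to \PGL(n,k)$ lifts; I expect this to be the main obstacle, and the point at which a naive exponential/logarithm argument would fail in characteristic $p$. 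The device that avoids this is unipotency: since $\G_a$ is connected unipotent, the closed image $\theta(\G_a) \cong \G_a/\ker\theta$ is a unipotent subgroup of $\PGL(n,k)$, hence conjugate into the image $\bar U$ of the upper-triangular unipotent subgroup $U \subset \GL(n,k)$. Because $\G_m$ is central and meets $U$ only in $\{I_n\}$, one has $\pi^{-1}(\bar U)=\G_m\times U$, so $\pi|_U : U \to \bar U$ is an isomorphism of algebraic groups, and $\varphi := (\pi|_U)^{-1}\circ\theta$ (after conjugating $U$) is the required lift. Crucially this argument is uniform in the characteristic, which is the payoff.

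Finally I would verify that each bijection intertwines the natural equivalence relations, conjugation by $\GL(n,k)$ on (a), (b), (c) and by $\PGL(n,k)\cong\Aut(\mathbb{P}^{n-1})$ on (d), (e), (f), together with equivariant isomorphism of actions on (f), so that the six correspondences descend to equivalence classes as asserted. Splicing the two chains together through the hub (c) then yields the full web of one-to-one correspondences.
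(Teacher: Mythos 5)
Your proposal is correct, and apart from one step it is the same architecture as the paper's: the paper proves the linear chain (a) $\cong$ (b) $\cong$ (c) $\cong$ (d) $\cong$ (e) $\cong$ (f), with the tautological Hopf-algebra dictionary handling (a), (b), (c) (your hub-at-(c) arrangement is only a cosmetic reshuffling of that content), the isomorphism $\jmath : \PGL(n, k) \to \Aut(\mathbb{P}^{n - 1})$ for (d) $\cong$ (e), the dictionary $\Phi, \Psi$ between actions and homomorphisms for (e) $\cong$ (f), and routine lemmas (Lemmas 1.4, 1.7, 1.9, 1.12) checking compatibility with the equivalence relations --- including the point you should make explicit, namely that every $P \in \PGL(n, k)$ lifts to some $Q \in \GL(n, k)$ so that conjugations match across $\pi$. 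The genuine divergence is the surjectivity half of the lifting lemma (Lemma 1.5, that every $\theta : \G_a \to \PGL(n, k)$ admits a unique homomorphic lift). You argue by structure theory: $\theta(\G_a)$ is smooth, connected and unipotent, hence conjugate into $\bar{U} = \pi(U)$ by Borel fixed-point/conjugacy theory, and $\pi|_U : U \to \bar{U}$ is an isomorphism because $U \cap \G_m = \{ I_n \}$, so the lift is a homomorphism on the nose. The paper instead argues geometrically: it factors $\theta$ through its reduced image $X \cong \A^1$ (citing Milne for smoothness of the image --- the same characteristic-$p$ care your smooth-connected-image claim implicitly requires), observes that $E = \pi^{-1}(X)_\red \to X$ is a $\G_m$-bundle which is trivial because $\Pic(\A^1) = 0$, takes a section to obtain a morphism $f$ with $\pi \circ f = \theta$ that need not be a homomorphism, and then corrects it by setting $\varphi := \nu \circ f$ with $\nu(A) := (\det A)^{-1} A$. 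Your unipotency route buys a lift that is automatically a homomorphism, at the price of invoking the theory of Borel subgroups; the paper's route is more self-contained (only $\Pic(\A^1) = 0$ plus the determinant normalization), and its tool $\nu$ also delivers uniqueness via $\det \varphi(t) \equiv 1$ --- which is the same triviality of $\Hom_\alggr(\G_a, \G_m)$ (units of $k[T]$ are constants) that powers your injectivity argument. Both arguments are, as you note, uniform in the characteristic.
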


The following theorem works provided that $p > 0$. 

\begin{thm}
Assume $p > 0$. 
There exists a one-to-one correspondence between the following sets {\rm ($\alpha$)} and {\rm ($\beta$)}: 
\begin{enumerate}

\item[\rm ($\alpha$)] the set of all group homomorphisms from $(\Z/p\Z)^r$ to $\GL(n, k)$, where $r$ ranges over all non-negative integers; 

\item[\rm ($\beta$)] the set $\Mat(n, k[T])^E \times \Z_{\geq 0}$. 
\end{enumerate}
\end{thm}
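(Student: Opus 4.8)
The plan is to describe both sides as sets of finite sequences of commuting nilpotent matrices and then to match them by recording trailing zeros. First I would unwind $(\alpha)$. Over a field of characteristic $p$ an element $g \in \GL(n,k)$ satisfies $g^p = I_n$ if and only if its minimal polynomial divides $(X-1)^p$, i.e.\ $g = I_n + N$ with $N$ nilpotent and $N^p = 0$. Fixing the standard generators $e_1,\dots,e_r$ of $(\Z/p\Z)^r$, a homomorphism $\rho$ is therefore the same datum as an ordered tuple $(N_1,\dots,N_r)$ of pairwise commuting nilpotent matrices with $N_i^p = 0$ (pairwise commuting because the group is abelian), and different $r$ give disjoint data. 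Hence $(\alpha)$ is identified with the set of all finite sequences of pairwise commuting nilpotent matrices $N$ with $N^p = 0$, the length of the sequence being $r$.

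Next, and this is the technical heart, I would establish a canonical form for exponential matrices: every $M(T)\in\Mat(n,k[T])^E$ factors uniquely as
\[
M(T)=\exp(TN_1)\,\exp(T^{p}N_2)\cdots \exp(T^{p^{l-1}}N_l),
\]
with $N_1,\dots,N_l$ pairwise commuting, $N_i^p=0$, and $N_l\neq 0$ (the identity matrix corresponding to the empty sequence). Each factor is a bona fide polynomial exponential matrix: when $N^p=0$ the truncated series $\exp(TN)=\sum_{j=0}^{p-1}(TN)^j/j!$ is a polynomial, a direct binomial computation gives $\exp(SN)\exp(TN)=\exp((S+T)N)$, and precomposing with the Frobenius $T\mapsto T^{p^{i-1}}$ preserves the homomorphism property. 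For existence and uniqueness I would pass, via Theorem 0.1, to the homomorphism $\varphi:\G_a\to\GL(n,k)$. Its differential sends the generator of $\mathrm{Lie}\,\G_a$ to $N_1=M'(0)$; since the $p$-operation is trivial on $\mathrm{Lie}\,\G_a$ while on $\mf{gl}(n,k)$ it is the $p$-th matrix power, one gets $N_1^p=0$. Then $\exp(-TN_1)M(T)$ is again exponential (all coefficients of $M$ commute, the image being abelian) and has vanishing differential, hence factors through the Frobenius endomorphism of $\G_a$; writing it as $M_1(T^p)$ and iterating on the exponential matrix $M_1$ produces the asserted product, the process terminating because the polynomial degree drops at every Frobenius descent. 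Uniqueness follows by reading off $N_i$ as the coefficient of $T^{p^{i-1}}$ in $M(T)$: since $N_j^p=0$, the base-$p$ expansion shows no other monomial contributes to that coefficient. This identifies $\Mat(n,k[T])^E$ with the reduced sequences, those ending in a nonzero entry together with the empty one.

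Finally I would assemble the bijection. Both $(\alpha)$ and $\Mat(n,k[T])^E$ are now sets of sequences of commuting nilpotents with $p$-th power zero --- all finite sequences for $(\alpha)$, the reduced ones for $\Mat(n,k[T])^E$. The map sending a length-$r$ sequence to the pair consisting of its reduction (delete the trailing zeros) and the number $s$ of deleted zeros is a bijection onto (reduced sequences)$\times\Z_{\geq 0}=\Mat(n,k[T])^E\times\Z_{\geq 0}=(\beta)$; its inverse appends $s$ zeros to the reduced sequence attached to $M$. Middle zeros are genuine data and are preserved, only trailing zeros being absorbed into $s$; in particular the trivial group ($r=0$) goes to $(I_n,0)$, while the distinct points coming from the trivial representations of $\Z/p\Z$ and $(\Z/p\Z)^2$ go to $(I_n,1)$ and $(I_n,2)$, confirming that the disjoint union over $r$ is respected.

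The main obstacle is the canonical form, and within it the two structural facts special to characteristic $p$: that the differential of a homomorphism of algebraic groups respects the $p$-operation (forcing $N_i^p=0$), and that a homomorphism out of $\G_a$ with vanishing differential factors through the Frobenius (which is what keeps the induction going). Granting these, the binomial identity for the truncated exponential, the base-$p$ uniqueness, and the trailing-zero bookkeeping are all routine.
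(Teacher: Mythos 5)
Your proposal is correct and follows essentially the same route as the paper: identifying the homomorphisms $(\Z/p\Z)^r \to \GL(n, k)$ with tuples $(N_1, \ldots, N_r)$ of pairwise commuting nilpotent matrices satisfying $N_i^p = O_n$ is the paper's Lemma 2.1, the canonical factorization $A(T) = \prod_{i=1}^{l} \Exp(T^{p^{i-1}} N_i)$ is exactly the input the paper invokes from \cite[Lemma 1.5]{Tanimoto 2019}, and your trailing-zero bookkeeping reproduces the paper's $l(\rho)$, $\rho_{\rmin}$ and the decomposition $\E_{\geq 0}(n, k) \cong \E_{\rmin}(n, k) \times \Z_{\geq 0}$ of Claims 2.4--2.6. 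The only difference is that where the paper cites the factorization, you supply a correct in-line proof of it (existence via $N_1^p = 0$ from the restricted Lie algebra structure and Frobenius descent for homomorphisms with vanishing differential, uniqueness via base-$p$ extraction of the coefficient of $T^{p^{i-1}}$), which also furnishes the uniqueness used implicitly in the injectivity step of Claim 2.4.
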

\vspace{1em}

We employ the following notation and a definition:

For any commutative ring $R$, we denote by $\Mat(n, R)$ the set of 
all $n \times n$ matrices whose entries belong to $R$ 
and denote by $R[T]$ the polynomial ring in one variable over $R$. 

For any field $k$, a matrix $A(T)$ of $\Mat(n, k[T])$ 
is said to be an {\it exponential matrix} of $\Mat(n, k[T])$ 
if $A(T)$ satisfies the following conditions {\rm (1)} and {\rm (2)}: 
\begin{enumerate}[label = {\rm (\arabic*)}]
\item $A(T) \, A(T') = A(T + T')$ in $\Mat(n, k[T, T'])$, 
where $k[T, T']$ denotes the polynomial ring in two variables over $k$. 

\item $A(0) = I_n$. 
\end{enumerate}
We denote by $\Mat(n, k[T])^E$ the set of all exponential matrices 
$A(T)$ of $\Mat(n, k[T])$.

%We denote by $\sfP$ the $p$-polynomial ring over $k$. 
%
%
%Let $p$ be a prime number and let $\Z/p\Z$ denote the finite cyclic group of order $p$. 
%For any integer $r \geq 2$, we denote by $(\Z/p\Z)^r$ the direct product group 
%of $r$-copies of $\Z/p\Z$. We let $(\Z/p\Z)^1$ denote $\Z/p\Z$ and 
%$(\Z/p\Z)^0$ denote the trivial group $\{ e \}$. 
%
%
%Let $r \geq 1$. For any integer $1 \leq i \leq r$, let $e_i$ be the element of 
%$(\Z/p\Z)^r$ such that the $i$-th component of $e_i$ is $[1]$ and the other 
%components of $e_i$ are zero. 
%
%

%Section 
\section{Correspondences}

%Subsection
\subsection{Equivalences}

\begin{enumerate}[label = {\rm (\alph*)}]
\item {\bf Equivalence of exponential matrices of $\Mat(n, k[T])$. } 
\vspace{3pt}

For any regular matrix $P$ of $\GL(n, k)$, we can define a map 
$\inn_P^{\rm (a)} : \Mat(n, k[T])^E \to \Mat(n, k[T])^E$ as 
\[
 \inn_P^{\rm (a)}(\, A(T) \,) := P \, A(T) \, P^{-1} . 
\]

Let $A_1(T)$ and $A_2(T)$ be exponential matrices of $\Mat(n, k[T])$. 
We say that $A_1(T)$ and $A_2(T)$ are {\it equivalent} if there exists a regular matrix 
$P$ of $\GL(n, k)$ such that 
\[
A_2(T) =   \inn_P^{\rm (a)} ( \,  A_1(T) \, ). 
\]

\item {\bf Equivalence of homomorphisms $k[\, \GL(n, k) \,] \to k[T]$ of Hopf algebras. } 
\vspace{3pt} 

Let  
\[
 \bigl( \, k[\, \GL(n, k) \,], \; \Delta_{k[\, \GL(n, k) \,]}, \; \epsilon_{k[\, \GL(n, k) \,]}, \; S_{k[\, \GL(n, k) \,]} \, \bigr)
\]
be the commutative Hopf algebra corresponding to the linear algebraic group $\GL(n, k)$, where  
$k[\, \GL(n, k) \,]$ denotes the coordinate ring of $\GL(n, k)$, i.e., 
\[
 k[\, \GL(n, k) \,] = k\bigl[\, x_{i, j} \; (\, 1 \leq i, j \leq n \,) , \; 1/ d \, \bigr] , \quad 
d : = \det\bigl( \, (x_{i, j})_{1 \leq i, j \leq n} \, \bigr), 
\]
and where $\Delta_{k[\, \GL(n, k) \,]}$ is a comultiplication, $\epsilon_{k[\, \GL(n, k) \,]}$ is a counit and $S_{k[\, \GL(n, k) \,]}$ is an antipode, 
i.e., 
\begin{align*}
 \Delta_{k[\, \GL(n, k) \,]}(x_{i, j}) 
 & = \sum_{\ell = 1}^{n} x_{i, \ell} \otimes x_{\ell, j} 
\qquad \text{ for all \quad $1 \leq i, j \leq n$} ,  \\ 
%%% 
\epsilon_{k[\, \GL(n, k) \,]}(x_{i, j}) 
 & = 
\left\{
\begin{array}{l @{\qquad} l}
 1 & \text{ if \quad $i = j$}, \\
 0 & \text{ if \quad $i \ne j$} , 
\end{array}
\right.  \\
%%%
 S_{k[\, \GL(n, k) \,]}(x_{i, j}) 
 & = (-1)^{i + j} \, d^{-1} \det\bigl( \, (x_{r, s})_{r \ne j, \; s \ne i}) \, \bigr) 
\qquad \text{ for all \quad $1 \leq i, j \leq n$} . 
\end{align*}

Let $(k[T], \Delta_{k[T]}, \epsilon_{k[T]}, S_{k[T]})$ be the Hopf algebra corresponding to the additive group $\G_a$, 
where $\Delta_{k[T]}$ is a comultiplication, $\epsilon_{k[T]}$ is a counit and $S_{k[T]}$ is an antipode. 
So, $\Delta_{k[T]}(T) = T \otimes 1 + 1 \otimes T$, $\epsilon_{k[T]}(T) = 0$ and $S_{k[T]}(T) = - T$.

For any regular matrix $P$ of $\GL(n, k)$, we denote by $\widehat{p}_{i, j}$ the $(i, j)$-th entry of $P^{-1}$ 
$(1 \leq i, j \leq n)$, i.e., $P^{-1} = (\, \widehat{p}_{i, j} \,)_{1 \leq i, j \leq n}$. 
We can define a homomorphism $\inn_P^{\rm (b)} : k[\, \GL(n, k) \,] \to k[\, \GL(n, k) \,]$ as 
\[
 \inn_P^{\rm (b)}(x_{i, j} )
 = 
\sum_{1 \leq \lambda, \mu \leq n} p_{i, \, \lambda} \cdot x_{\lambda, \, \mu} 
 \cdot \widehat{p}_{\mu, \, j}
\qquad 
\text{ for all \quad $1 \leq i, j \leq n$} . 
\]

Let $h_1 : k[\, \GL(n, k) \,] \to k[T]$ and $h_2 : k[\, \GL(n, k) \,] \to k[T]$ be homomorphisms 
of Hopf algebras. 
We say that $h_1$ and $h_2$ are {\it equivalent} if there exists a regular 
matrix $P$ of $\GL(n, k)$ such that 
\[
 h_2 = h_1 \circ \inn_P^{\rm (b)} .
\]

\item {\bf Equivalence of homomorphisms $\G_a \to \GL(n, k)$ of algebraic groups.} 
\vspace{3pt}

For any regular matrix $P$ of $\GL(n, k)$, 
we can define a homomorphism $\inn_P^{\rm (c)} : \GL(n, k) \to \GL(n, k)$ as 
\[
 \inn_P^{\rm (c)}( Q ) := P \, Q \, P^{-1}. 
\]

Let $\varphi_1 : \G_a \to \GL(n, k)$ and $\varphi_2 : \G_a \to \GL(n, k)$ be 
homomorphisms of algebraic groups. 
We say that $\varphi_1$ and $\varphi_2$ are {\it equivalent} if there exists a regular 
matrix $P$ of $\GL(n, k)$ such that 
\[
 \varphi_2 = \inn_P^{\rm (c)} \circ \varphi_1 . 
\]

\item {\bf Equivalence of homomorphisms $\G_a \to \PGL(n, k)$ of algebraic groups.} 
\vspace{3pt}

For any regular matrix $P$ of $\GL(n, k)$, 
we can define a homomorphism $\inn_P^{\rm (d)} : \PGL(n, k) \to \PGL(n, k)$ as 
\[
 \inn_P^{\rm (d)}( [Q] ) := [\, P \, Q \, P^{-1} \,] . 
\]

Let $\theta_1 : \G_a \to \PGL(n, k)$ and $\theta_2 : \G_a \to \PGL(n, k)$ 
be homomorphisms of algebraic groups. 
We say that $\theta_1$ and $\theta_2$ are {\it equivalent} if there exists a 
regular matrix $P$ of $\GL(n, k)$ such that 
\[
 \theta_2 = \inn_P^{\rm (d)} \circ \theta_1 . 
\]

\item {\bf Equivalence of homomorphisms $\G_a \to \Aut(\mathbb{P}^{n - 1})$ of algebraic groups.} 
\vspace{3pt}

For any $\sigma \in \Aut(\mathbb{P}^{n - 1})$, we can define a homomorphism 
$\inn_\sigma^{\rm (e)} : \Aut(\mathbb{P}^{n - 1}) \to \Aut(\mathbb{P}^{n - 1})$ as 
\[
 \inn_\sigma^{\rm (e)}(f) := \sigma \circ f \circ \sigma^{-1} . 
\]

Let $\vartheta_1 : \G_a \to \Aut(\mathbb{P}^{n - 1})$ and $\vartheta_2 : \G_a \to \Aut(\mathbb{P}^{n - 1})$ be homomorphisms of algebraic groups. 
We say that $\vartheta_1$ and $\vartheta_2$ are {\it equivalent} 
if there exists an element $\sigma$ of $\Aut(\mathbb{P}^{n - 1})$
such that $\vartheta_2 = \inn_\sigma^{\rm (e)} \circ  \vartheta_1$.

\item {\bf Equivalence of $\G_a$-actions on $\mathbb{P}^{n - 1}$.} 
\vspace{3pt}

Two $\G_a$-actions $\mu_1 : \G_a \times \mathbb{P}^{n - 1} \to \mathbb{P}^{n - 1}$ and 
$\mu_2 : \G_a \times \mathbb{P}^{n - 1} \to \mathbb{P}^{n - 1}$ are 
said to be {\it equivalent} 
if there exists an automorphism $\sigma : \mathbb{P}^{n - 1} \to \mathbb{P}^{n - 1}$ such that 
$\mu_2 \circ (\id_{\G_a} \times \sigma) = \sigma \circ {\mu_1}$, i.e., the following diagram is commutative, where $\id_{\G_a} : \G_a \to \G_a$ denotes the identity map: 
\[
\xymatrix@R=36pt@C=36pt@M=6pt{
 \G_a \times \mathbb{P}^{n - 1} \ar[r]^(.57){\mu_1} \ar[d]_{\id_{\G_a} \times \sigma}  & \mathbb{P}^{n - 1} \ar[d]^\sigma \\
 \G_a \times \mathbb{P}^{n - 1} \ar[r]^(.57){\mu_2}  & \mathbb{P}^{n - 1}  
}
\]
\end{enumerate}

%Subsection
\subsection{Proof of Theorem 0.1}

%Subsubsection 
\subsubsection{(a) $\cong$ (b) $\cong$ (c)}

We shall give one-to-one correspondences among sets (a), (b), (c).

We denote by $\cA^+$ the set of all matrices $A(T)$ of $\Mat(n, k[T])$ 
such that $\det A(T) \in k \backslash \{ \, 0 \,\}$, i.e., 
\begin{fleqn}[8em]
\begin{align*}
\cA^+
 & := 
\{ \, 
 A(T) \in \Mat(n, k[T]) 
\mid 
\det  A(T) \in k \backslash \{ 0 \}
\, \} . 
\end{align*}
\end{fleqn}
Clearly, $\Mat(n, k[T])^E \subset \cA^+$ (see \cite[Lemmas 1.8 and 1.9]{Tanimoto 2019}).

We denote by $\cB^+$ the set of all homomorphisms $h : k[\, \GL(n, k) \,] \to k[T]$ of $k$-algebras, 
i.e., 
\begin{fleqn}[8em]
\begin{align*}
 \cB^+ 
 & :=  \Hom_{\text{$k$-alg}} (k[\, \GL(n, k) \,], k[T])  . 
\end{align*}
\end{fleqn}
Clearly, $\Hom_\Hopf(k[\, \GL(n, k) \,], k[T]) \subset \cB^+$, where $\Hom_\Hopf$ denotes 
homomorphisms of Hopf algebras.

We denote by $\cC^+$ the set of all homomorphisms $\varphi : \G_a \to \GL(n, k[T])$ 
of affine varieties, i.e., 
\begin{fleqn}[8em]
\begin{align*}
 \cC^+ 
 & := \Hom_{\text{aff.var}}(\G_a, \GL(n, k)) . 
\end{align*}
\end{fleqn}
Clearly, $\Hom_\alggr(\G_a, \GL(n, k)) \subset \cC^+$, where $\Hom_\alggr$ denotes 
homomorphisms of algebraic groups.

Let $F_{\cB^+, \, \cA^+} : \cA^+ \to \cB^+$ be the map defined, as follows: 
To a matrix $A(T) = (\, a_{i, j}(T) \,)_{1 \leq i, j \leq n}$ of $\cA^+$, 
we assign an element $h : k[\, \GL(n, k) \,] \to k[T]$ of $\cB^+$ as 
\[
 h(x_{i, j}) := a_{i, j}(T) \qquad \text{ for all \quad $1 \leq i, j \leq n$}  . 
\]

Let $F_{\cC^+, \, \cB^+} : \cB^+ \to \cC^+$ be the map defined, as follows: 
To an element $h : k[\, \GL(n, k) \,] \to k[T]$ of $\cB^+$, we naturally 
assign an element $\varphi : \G_a \to \GL(n, k)$ of $\cC^+$.

We have the following diagram:
\[
\xymatrix@R=36pt@C=36pt@M=6pt{
 \cA^+ \ar[r]^{F_{\cB^+, \, \cA^+}}
  & \cB^+  \ar[r]^{F_{\cC^+, \, \cB^+}}
  & \cC^+ 
\\
 \Mat(n, k[T])^E  \ar@{^(->}[u] 
  & \Hom_\Hopf(k[\, \GL(n, k) \,], k[T])  \ar@{^(->}[u]
  & \Hom_\alggr(\G_a, \GL(n, k))  \ar@{^(->}[u]
}
\]

\begin{lem}
The following assertions {\rm (1)} and {\rm (2)} hold true: 
\begin{enumerate}[label = {\rm (\arabic*)}]
\item The map $F_{\cB^+, \, \cA^+}$ gives a one-to-one coresspondence 
between the sets $\cA^+$ and $\cB^+$. 

\item The map $F_{\cC^+, \,  \cB^+}$ gives a one-to-one correspondence 
between the sets $\cB^+$ and $\cC^+$. 
\end{enumerate} 
\end{lem}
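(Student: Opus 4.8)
The plan is to prove the two assertions independently, recognizing each as an instance of a standard correspondence: assertion (1) is the universal property of the coordinate ring $k[\, \GL(n, k) \,]$ viewed as a localized polynomial ring, while assertion (2) is the comorphism duality between affine varieties and their coordinate rings.

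For assertion (1), I would first verify that $F_{\cB^+, \, \cA^+}$ is well-defined. Recall that $k[\, \GL(n, k) \,] = k[\, x_{i, j} \; (1 \leq i, j \leq n), \; 1/d \,]$ with $d = \det\bigl((x_{i, j})\bigr)$. A $k$-algebra homomorphism out of this localization is the same datum as a $k$-algebra homomorphism out of the polynomial ring $k[\, x_{i, j} \,]$ that sends $d$ to a unit of the target $k[T]$. Since a homomorphism out of a polynomial ring is freely determined by the images $a_{i, j}(T)$ of the generators $x_{i, j}$, and since the units of $k[T]$ are exactly $k \backslash \{0\}$, the prescription $h(x_{i, j}) := a_{i, j}(T)$ yields an element of $\cB^+$ exactly when $h(d) = \det A(T)$ lies in $k \backslash \{0\}$, that is, exactly when $A(T) \in \cA^+$. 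Injectivity is then immediate, since $h$ recovers each entry via $a_{i, j}(T) = h(x_{i, j})$. For surjectivity, given any $h \in \cB^+$ I would set $a_{i, j}(T) := h(x_{i, j})$; then $\det A(T) = h(d)$ is a unit of $k[T]$, because $h(d) \cdot h(1/d) = h(1) = 1$ forces $h(d) \in k \backslash \{0\}$, so $A(T) \in \cA^+$ and $F_{\cB^+, \, \cA^+}(A(T)) = h$.

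For assertion (2), the map $F_{\cC^+, \, \cB^+}$ is simply the inverse of the comorphism (pullback) construction $\varphi \mapsto \varphi^\ast$. Since $k$ is algebraically closed and both $k[T]$ and $k[\, \GL(n, k) \,]$ are reduced, finitely generated $k$-algebras, the contravariant coordinate-ring functor is an anti-equivalence of categories and hence induces a bijection on morphism sets; concretely, every homomorphism $\varphi : \G_a \to \GL(n, k)$ of affine varieties determines $\varphi^\ast : k[\, \GL(n, k) \,] \to k[T]$, and conversely every $k$-algebra homomorphism $h$ arises as $\varphi^\ast$ for a unique such $\varphi$. This establishes the bijection.

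There is no deep obstacle here; the entire content lies in invoking the correct universal properties. The one point demanding care is matching the set-theoretic condition $\det A(T) \in k \backslash \{0\}$ that defines $\cA^+$ with the invertibility of the image of $d$, since this is precisely what guarantees well-definedness in both directions of assertion (1).
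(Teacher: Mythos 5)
Your proof is correct and is essentially the argument the paper intends: the paper's own proof of this lemma consists of the single sentence that both assertions are straightforward, and your write-up supplies exactly the standard details behind that claim. In particular, your two key points --- that a $k$-algebra homomorphism out of the localization $k[\, x_{i,j}, 1/d \,]$ is freely determined by the images $a_{i,j}(T)$ subject to $h(d) \in k \backslash \{0\}$ (the units of $k[T]$), and that $F_{\cC^+, \, \cB^+}$ is the bijection on morphism sets coming from the anti-equivalence between affine varieties over the algebraically closed field $k$ and reduced finitely generated $k$-algebras --- are precisely the verifications being elided.
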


\begin{proof}
The proofs of assertions (1) and (2) are straightforward. 
\end{proof}

\begin{lem}
Let 
\[
 A(T) = (\, a_{i, j}(T) \,)_{1 \leq i, j \leq n} \in \cA^+, \qquad 
 h := F_{\cB^+, \,  \cA^+}(A(T)) \in \cB^+ , \qquad  
 \varphi := F_{\cC^+, \,  \cB^+}(h) \in \cC^+ . 
\] 
Then the following conditions {\rm (1), (2), (3), (4)} are equivalent: 
\begin{enumerate}[label = {\rm (\arabic*)}]
\item $A(T) \in \Mat(n, k[T])^E$.

\item We have 
\begin{align*}
 & a_{i, j}(T \otimes 1 + 1 \otimes T) = \sum_{\ell = 1}^{n} a_{i, \ell}(T) \otimes a_{\ell, j}(T) 
\qquad 
\text{ for all \quad $1 \leq i, j \leq n$} 
\intertext{and}
&
 a_{i, j}(0) 
 = 
\left\{
\begin{array}{l @{\qquad} l}
 1 & \text{ if \quad $i = j$}, \\
 0 & \text{ if \quad $i \ne j$}. 
\end{array}
\right.
\end{align*} 

\item $h \in \Hom_\Hopf(k[\, \GL(n, k) \,], k[T])$. That is to say, the following two diagrams 
are commutative: 
\begin{align*}
\xymatrix@R=36pt@C=24pt@M=6pt{
 k[\, \GL(n, k) \,] \otimes_k k[\, \GL(n, k) \,]  \ar[rr]^(.6){h \otimes h} 
 & 
 & k[T] \otimes_k k[T] \\
k[\, \GL(n, k) \,] \ar[rr]_h \ar[u]^{\Delta_{k[\, \GL(n, k) \,]}} 
 &
 & k[T] \ar[u]_{\Delta_{k[T]}}
}
\end{align*}

\begin{align*}
\hspace{18pt}
\xymatrix@R=36pt@C=6pt@M=6pt{
k[\, \GL(n, k) \,] \ar[rrrrrrr]^h \ar[rrrd]_{\epsilon_{k[\, \GL(n, k) \,]}} 
 &  
 &
 &
 & 
 &
 & 
 & k[T] \ar[lllld]^{\epsilon_{k[T]}}   \\
%%% 
 &
 &
 & k  
 &
 & 
 &
 & 
}
\end{align*}

\item $\varphi \in \Hom_\alggr(\G_a, \GL(n, k))$. 
\end{enumerate} 
\end{lem}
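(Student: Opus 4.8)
The plan is to prove the four conditions equivalent by establishing the chain $(1) \Leftrightarrow (2) \Leftrightarrow (3) \Leftrightarrow (4)$, viewing each step as a transcription of the same underlying data through the three descriptions $A(T) \leftrightarrow h \leftrightarrow \varphi$ fixed before the statement. Since these are all honest equivalences rather than one-directional implications, a single chain (rather than a cycle) is the most economical bookkeeping.

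For $(1) \Leftrightarrow (2)$ I would read off the two matrix identities of $\Mat(n, k[T])^E$ entrywise. Writing $A(T) A(T') = A(T + T')$ in $\Mat(n, k[T, T'])$ componentwise gives $\sum_{\ell = 1}^n a_{i, \ell}(T) \, a_{\ell, j}(T') = a_{i, j}(T + T')$ for all $i, j$, while $A(0) = I_n$ gives $a_{i, j}(0) = \delta_{i, j}$. The first family of identities becomes exactly the first display of (2) under the canonical $k$-algebra isomorphism $k[T] \otimes_k k[T] \xrightarrow{\sim} k[T, T']$ determined by $T \otimes 1 \mapsto T$ and $1 \otimes T \mapsto T'$: under it $a_{i, j}(T \otimes 1 + 1 \otimes T) \mapsto a_{i, j}(T + T')$ and $a_{i, \ell}(T) \otimes a_{\ell, j}(T) \mapsto a_{i, \ell}(T) \, a_{\ell, j}(T')$. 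The condition $a_{i,j}(0)=\delta_{i,j}$ is literally the second display of (2), so the two conditions match.

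For $(2) \Leftrightarrow (3)$ I would evaluate the two diagrams on the algebra generators $x_{i, j}$, using $h(x_{i, j}) = a_{i, j}(T)$. Chasing $x_{i, j}$ through the comultiplication square yields, on one side, $\Delta_{k[T]}(h(x_{i, j})) = a_{i, j}(T \otimes 1 + 1 \otimes T)$ (using $\Delta_{k[T]}(T) = T \otimes 1 + 1 \otimes T$ together with the fact that $h$ and $\Delta_{k[T]}$ are algebra maps), and on the other side $(h \otimes h)(\Delta_{k[\GL(n,k)]}(x_{i, j})) = \sum_{\ell = 1}^n a_{i, \ell}(T) \otimes a_{\ell, j}(T)$; these agree for all $i, j$ exactly when the first display of (2) holds. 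Likewise the counit triangle at $x_{i, j}$ reads $\epsilon_{k[T]}(a_{i, j}(T)) = a_{i, j}(0)$ against $\epsilon_{k[\GL(n,k)]}(x_{i, j}) = \delta_{i, j}$, matching the second display. It then remains to pass from agreement on the $x_{i, j}$ to commutativity on all of $k[\GL(n, k)]$, which is generated as a $k$-algebra by the $x_{i, j}$ and $d^{-1}$. Both composites in each diagram are $k$-algebra homomorphisms, hence agree on every polynomial in the $x_{i, j}$, in particular on $d$; and since an algebra homomorphism $\psi$ sends the unit $d^{-1}$ to $\psi(d)^{-1}$, agreement on $d$ already forces agreement on $d^{-1}$, so no separate check is needed. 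Because a bialgebra homomorphism between Hopf algebras automatically commutes with the antipodes, compatibility with $\Delta$ and $\epsilon$ is precisely the Hopf-algebra-homomorphism condition, giving $(2) \Leftrightarrow (3)$. For $(3) \Leftrightarrow (4)$ I would invoke the standard contravariant equivalence between affine algebraic groups over $k$ and commutative Hopf $k$-algebras, under which $\varphi$ and its comorphism $h$ correspond as in Lemma 1.1(2); compatibility of $\varphi$ with multiplication, identity, and inversion translates into compatibility of $h$ with comultiplication, counit, and antipode, the last clause again being automatic, so $\varphi \in \Hom_\alggr$ iff $h \in \Hom_\Hopf$.

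The main obstacle I anticipate is the bookkeeping in $(2) \Leftrightarrow (3)$: making the reduction to the generators $x_{i, j}$ rigorous rests on the observation that each composite is determined on $d^{-1}$ by its value on $d$, and on the remark that commuting with $\Delta$ and $\epsilon$ already captures the full definition of a Hopf algebra homomorphism. Everything else is a faithful translation of one and the same pair of equations across the three categories, so once these two points are in place the lemma follows.
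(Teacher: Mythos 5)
Your proposal is correct and follows the paper's own proof essentially step for step: the same entrywise translation via the canonical isomorphism $k[T, T'] \cong k[T] \otimes_k k[T]$ for $(1) \Leftrightarrow (2)$, the same evaluation of both composites on the generators $x_{i, j}$ with the observation that agreement on $d$ forces agreement on $d^{-1}$ for $(2) \Leftrightarrow (3)$, and the standard duality between affine algebraic groups and commutative Hopf algebras for $(3) \Leftrightarrow (4)$, which the paper simply declares clear. Your explicit remarks on the automatic antipode compatibility and on reducing $d^{-1}$ to $d$ merely spell out points the paper leaves implicit, so the two arguments coincide in substance.
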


\begin{proof}
We first prove (1) $\Longleftrightarrow$ (2).  
Condition (1) holds true if and only if 
\begin{align*}
 & a_{i, j}(T + T') = \sum_{\ell = 1}^{n} a_{i, \ell}(T) \cdot a_{\ell, j}(T') 
\qquad 
\text{ for all \quad $1 \leq i, j \leq n$} 
\intertext{and}
&
 a_{i, j}(0) 
 = 
\left\{
\begin{array}{l @{\qquad} l}
 1 & \text{ if \quad $i = j$}, \\
 0 & \text{ if \quad $i \ne j$}. 
\end{array}
\right.
\end{align*} 
Consider the $k$-algebra isomorphsim $\xi : k[T, T'] \to k[T] \otimes_k k[T]$ 
defined by  
\[
 \xi(T) := T \otimes 1, \qquad 
 \xi(T') := 1 \otimes T. 
\]
The above two conditions hold true 
if and only if 
\begin{align*}
 & a_{i, j}(T \otimes 1 + 1 \otimes T) = \sum_{\ell = 1}^{n} a_{i, \ell}(T) \otimes a_{\ell, j}(T) 
\qquad 
\text{ for all \quad $1 \leq i, j \leq n$} 
\intertext{and}
&
 a_{i, j}(0) 
 = 
\left\{
\begin{array}{l @{\qquad} l}
 1 & \text{ if \quad $i = j$}, \\
 0 & \text{ if \quad $i \ne j$}. 
\end{array}
\right.
\end{align*}

We next prove (2) $\Longleftrightarrow$ (3). 
For all $1 \leq i, j \leq n$, we have 
\begin{align*}
 \bigl( \,  (\, h \otimes h \,) \circ \Delta_{k[\, \GL(n, k) \,]} \, \bigr) (x_{i, j} )
 & = 
 (\, h \otimes h \,) \left( \; 
\sum_{\ell = 1}^{n} x_{i, \ell} \otimes x_{\ell, j} 
\; \right) 
 = \sum_{\ell = 1}^{n} a_{i, \ell}(T) \otimes a_{\ell, j}(T) , 
\\
%%%%% 
(\, \Delta_{k[T]} \circ h \, )(x_{i, j}) 
 & =  \Delta_{k[T]} (a_{i, j}(T)) 
 = a_{i, j}(T \otimes 1 + 1 \otimes T) , \\
%%%%%
 \bigl( \, \epsilon_{k[T]} \circ h \, \bigr) (x_{i, j})
 & = \epsilon_{k[T]} (\, a_{i, j}(T) \,) 
 = a_{i, j}(0) , 
\\
%%%%%
\epsilon_{k[\, \GL(n, k) \,]}( x_{i, j} ) 
& = 
\left\{
\begin{array}{l @{\qquad} l}
 1 & \text{ if \quad $i = j$}, \\
 0 & \text{ if \quad $i \ne j$} . 
\end{array}
\right. 
\end{align*} 
So, the implication (3) $\Longrightarrow$ (2) is clear. 
Conversely, if condition (2) holds true, then we have 
$( (h\otimes h) \circ \Delta_{k[\, \GL(n, k) \,]})(x_{i, j}) = (\Delta_{k[T]} \circ h)(x_{i, j})$ 
and $(\epsilon_{k[T]} \circ h)( x_{i, j} ) = \epsilon_{k[\, \GL(n, k) \,]}(x_{i, j})$ 
for all $1 \leq i, j \leq n  + 1$, 
and thereby have 
\begin{align*}
 \bigl(\, (\, h \otimes h \,) \circ \Delta_{k[\, \GL(n, k) \,]} \, \bigr) \left( \frac{1}{d} \right) 
 & = \frac{1}{\bigl( \, (\, h \otimes h \,) \circ \Delta_{k[\, \GL(n, k) \,]} \, \bigr) (d)} 
 = \frac{1}{ (\, \Delta_{k[T]} \circ h \,)(d) }
 = (\, \Delta_{k[T]} \circ h \,)\left( \frac{1}{d} \right) , \\
%%%%% 
 (\, \epsilon_{k[T]} \circ h \,)\left( \frac{1}{d} \right) 
 & = 
\frac{1}{ (\, \epsilon_{k[T]} \circ h \,)(d) }
 = \frac{1}{ \epsilon_{k[\, \GL(n, k) \,]}(d) } 
 = \epsilon_{k[\, \GL(n, k) \,]}\left( \frac{1}{d} \right) . 
\end{align*} 
Thus condition (3) holds true.

The equivalence (3) $\Longleftrightarrow$ (4) is clear. 
\end{proof}

\begin{lem}
The following assertions {\rm (1)} and {\rm (2)} hold true: 
\begin{enumerate}[label = {\rm (\arabic*)}]
\item The map $F_{\cB^+, \,  \cA^+}$ gives a one-to-one correspondence 
between the sets $\Mat(n, k[T])^E$ and $\Hom_\Hopf(k[\, \GL(n, k) \,], k[T])$. 

\item The map $F_{\cC^+, \,  \cB^+}$ gives a one-to-one correspondence 
between the sets $\Hom_\Hopf(k[\, \GL(n, k) \,], k[T])$ and $\Hom_\alggr(\G_a, \GL(n, k))$. 
\end{enumerate} 
\end{lem}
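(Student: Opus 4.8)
The plan is to deduce both assertions directly from the two preceding lemmas, using the elementary principle that a bijection restricts to a bijection between two subsets as soon as it carries one exactly onto the other. The first lemma already gives the global bijections $F_{\cB^+, \, \cA^+} : \cA^+ \to \cB^+$ and $F_{\cC^+, \, \cB^+} : \cB^+ \to \cC^+$, and the second lemma supplies the precise matching of the distinguished subsets sitting inside $\cA^+$, $\cB^+$, $\cC^+$. Consequently no new computation is required; the argument is purely a matter of tracking preimages.

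For assertion (1), I would reason as follows. Since $F_{\cB^+, \, \cA^+}$ is injective on all of $\cA^+$, its restriction to the subset $\Mat(n, k[T])^E$ is automatically injective, and its values lie in $\cB^+$. For surjectivity onto $\Hom_\Hopf(k[\GL(n,k)], k[T])$, take any $h$ in this Hopf-algebra subset, viewed as an element of $\cB^+$. As $F_{\cB^+, \, \cA^+}$ is bijective, there is a unique $A(T) \in \cA^+$ with $F_{\cB^+, \, \cA^+}(A(T)) = h$. Applying the second lemma to this very pair (completing the triple with $\varphi := F_{\cC^+, \, \cB^+}(h)$), the hypothesis $h \in \Hom_\Hopf$ is exactly condition (3), which is equivalent to condition (1), that is, $A(T) \in \Mat(n, k[T])^E$. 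Hence the preimage already lies in the correct subset, so $h$ lies in the image of the restricted map, and the restriction is the asserted bijection.

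Assertion (2) is handled identically, now invoking the equivalence (3) $\Longleftrightarrow$ (4) of the second lemma. Injectivity of the restriction of $F_{\cC^+, \, \cB^+}$ is inherited from its injectivity on $\cB^+$; and for any $\varphi \in \Hom_\alggr(\G_a, \GL(n, k))$ the unique preimage $h \in \cB^+$ satisfies condition (4), hence condition (3), so that $h \in \Hom_\Hopf(k[\GL(n,k)], k[T])$ and $\varphi$ lies in the image of the restricted map.

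I do not expect a substantive obstacle here. The only point deserving explicit mention — rather than a genuine difficulty — is that the second lemma is stated for a single linked triple $A(T)$, $h = F_{\cB^+, \, \cA^+}(A(T))$, $\varphi = F_{\cC^+, \, \cB^+}(h)$. One must observe that the preimage obtained by inverting a global bijection is precisely the element that seeds that triple, which is immediate from the definitions of $F_{\cB^+, \, \cA^+}$ and $F_{\cC^+, \, \cB^+}$; this licenses the use of the equivalences (1)--(4). Beyond this bookkeeping the result is formal.
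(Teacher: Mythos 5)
Your proposal is correct and follows exactly the paper's intended argument: the paper's own proof is simply ``See Lemmas 1.1 and 1.2,'' i.e.\ the global bijections of Lemma 1.1 restricted to the distinguished subsets via the equivalences (1)$\Leftrightarrow$(3)$\Leftrightarrow$(4) of Lemma 1.2, which is precisely what you spell out. Your explicit remark about the linked triple $A(T)$, $h$, $\varphi$ is a fair piece of bookkeeping that the paper leaves implicit.
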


\begin{proof}
See Lemmas 1.1 and 1.2. 
\end{proof}

\begin{lem}
For each $i = 1, 2$, let $A_i(T)$ be an exponential matrix of $\Mat(n, k[T])^E$, 
let $h_i : k[\, \GL(n, k) \,] \to k[T]$ be a homomorphism of Hopf algebras such 
that 
\[
 h_i := F_{\cB^+, \,  \cA^+}(A_i(T)), 
\] 
and let $\varphi_i : \G_a \to \GL(n, k)$ be a homomorphism of 
algebraic groups over $k$ such that 
\[
 \varphi_i := F_{\cC^+, \,  \cB^+}(h_i(T)) . 
\]
Then the following conditions {\rm (1)}, {\rm (2)}, {\rm (3)} are equivalent: 
\begin{enumerate}[label = {\rm (\arabic*)}]
\item $A_1(T)$ and $A_2(T)$ are equivalent. 

\item $h_1$ and $h_2$ are equivalent. 

\item $\varphi_1$ and $\varphi_2$ are equivalent. 
\end{enumerate} 
\end{lem}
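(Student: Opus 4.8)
The plan is to leverage the two bijections $F_{\cB^+, \cA^+}$ and $F_{\cC^+, \cB^+}$ from Lemma 1.3 and to show that, \emph{for each fixed} regular matrix $P \in \GL(n, k)$, these bijections intertwine the three operations $\inn_P^{\rm (a)}$, $\inn_P^{\rm (b)}$, $\inn_P^{\rm (c)}$. Once these intertwining identities are in hand, the equivalence of conditions (1), (2), (3) follows immediately by quantifying over $P$: a bijection carries the statement ``$A_2 = \inn_P^{\rm (a)}(A_1)$ for some $P$'' to the corresponding statement about the $h_i$ and the $\varphi_i$ \emph{with the same $P$}.

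For (1) $\Longleftrightarrow$ (2), I would fix $P = (p_{i,j})$ with $P^{-1} = (\widehat{p}_{i,j})$ and compare both sides on the algebra generators $x_{i,j}$. Writing $A_1(T) = (a_{i,j}(T))$ so that $h_1(x_{i,j}) = a_{i,j}(T)$, the $(i,j)$-entry of $P\, A_1(T)\, P^{-1}$ equals $\sum_{\lambda, \mu} p_{i,\lambda}\, a_{\lambda,\mu}(T)\, \widehat{p}_{\mu, j}$, so
\[
 F_{\cB^+, \cA^+}\bigl(\inn_P^{\rm (a)}(A_1(T))\bigr)(x_{i,j})
 = \sum_{\lambda, \mu} p_{i,\lambda}\, a_{\lambda,\mu}(T)\, \widehat{p}_{\mu,j}
 = h_1\Bigl( \textstyle\sum_{\lambda,\mu} p_{i,\lambda}\, x_{\lambda,\mu}\, \widehat{p}_{\mu,j} \Bigr)
 = \bigl(h_1 \circ \inn_P^{\rm (b)}\bigr)(x_{i,j}).
\]
The two $k$-algebra homomorphisms $F_{\cB^+, \cA^+}(\inn_P^{\rm (a)}(A_1))$ and $h_1 \circ \inn_P^{\rm (b)}$ therefore agree on every generator $x_{i,j}$ (and automatically on $1/d$), hence coincide. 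Since $F_{\cB^+, \cA^+}$ is a bijection (Lemma 1.3(1)), we get $A_2 = \inn_P^{\rm (a)}(A_1)$ if and only if $h_2 = h_1 \circ \inn_P^{\rm (b)}$, which is exactly (1) $\Longleftrightarrow$ (2) after taking ``there exists $P$''.

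For (2) $\Longleftrightarrow$ (3), the key observation is that $\inn_P^{\rm (b)}$ is precisely the comorphism of $\inn_P^{\rm (c)}$: since $\inn_P^{\rm (c)}(Q) = P Q P^{-1}$, the $(i,j)$-entry of $\inn_P^{\rm (c)}(Q)$, viewed as a regular function of $Q = (x_{\lambda,\mu})$, is $\sum_{\lambda,\mu} p_{i,\lambda}\, x_{\lambda,\mu}\, \widehat{p}_{\mu,j} = \inn_P^{\rm (b)}(x_{i,j})$, so $(\inn_P^{\rm (c)})^{\ast} = \inn_P^{\rm (b)}$. The map $F_{\cC^+, \cB^+}$ sends a $k$-algebra homomorphism to the morphism of affine varieties whose comorphism it is, and is thus contravariant; using $\varphi_i^{\ast} = h_i$ together with $(\inn_P^{\rm (c)} \circ \varphi_1)^{\ast} = \varphi_1^{\ast} \circ (\inn_P^{\rm (c)})^{\ast} = h_1 \circ \inn_P^{\rm (b)}$, the relation $h_2 = h_1 \circ \inn_P^{\rm (b)}$ dualizes to $\varphi_2 = \inn_P^{\rm (c)} \circ \varphi_1$. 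Bijectivity (Lemma 1.3(2)) again upgrades this to the equivalence of the existential statements (2) and (3).

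I do not expect a deep obstacle; the entire content is the two intertwining identities above. The one point demanding care is the contravariance in the second step: passing from $h$ to $\varphi$ reverses the direction of composition, so ``precomposition by $\inn_P^{\rm (b)}$'' on the Hopf-algebra side must be matched with ``postcomposition by $\inn_P^{\rm (c)}$'' on the group side, and one must confirm that it is $\inn_P^{\rm (c)}$ itself (not its inverse) that appears. Recording the comorphism identity $(\inn_P^{\rm (c)})^{\ast} = \inn_P^{\rm (b)}$ explicitly is what pins this down and keeps the bookkeeping honest.
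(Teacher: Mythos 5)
Your proposal is correct and follows essentially the same route as the paper: the paper's own proof of (1) $\Longleftrightarrow$ (2) is exactly your generator-wise computation identifying the $(i,j)$-entry of $P\,A_1(T)\,P^{-1}$ with $(h_1 \circ \inn_P^{\rm (b)})(x_{i,j})$, with the same fixed $P$ on both sides. For (2) $\Longleftrightarrow$ (3) the paper simply declares the step straightforward, and your explicit comorphism identity $(\inn_P^{\rm (c)})^{\ast} = \inn_P^{\rm (b)}$ is precisely the correct way to fill in that detail, with the contravariance handled properly.
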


\begin{proof}
We first prove (1) $\Longleftrightarrow$ (2). 
Let $P = (\, p_{i, j} \,)_{1 \leq i, j \leq n}$ be a regular matrix of $\GL(n, k
)$. 
Write $P^{-1} = (\, \widehat{p}_{i, j} \,)_{1 \leq i, j \leq n}$, 
$A_1(T) = (\, a_{i, j}^{(1)} \,)_{1 \leq i, j \leq n}$ and 
$A_2(T) = (\, a_{i, j}^{(2)} \,)_{1 \leq i, j \leq n}$. 
So, $A_2(T) = \inn_P^{\rm (a)} \circ A_1(T)$ if and only if 
\[ 
 a_{i, j}^{(2)} 
=
 \sum_{1 \leq \lambda, \mu \leq n} 
p_{i, \, \lambda} \cdot a_{\lambda, \, \mu}^{(1)} \cdot \widehat{p}_{\mu, \, j} 
\qquad 
\text{ for all \quad $1 \leq i,  j \leq n$} . 
\]
We have 
\begin{align*}
& h_2(x_{i, j} ) = a_{i, j}^{(2)}  
\intertext{and}
& h_1(\, \inn_P^{\rm (b)} (x_{i, j}) \,)
  = h_1 \left( \; 
\sum_{1 \leq \lambda, \mu \leq n} 
p_{i, \, \lambda} \cdot x_{\lambda, \, \mu} \cdot \widehat{p}_{\mu, \, j} 
 \; \right) 
 = \sum_{1 \leq \lambda, \mu \leq n} 
p_{i, \, \lambda} \cdot a_{\lambda, \, \mu}^{(1)} \cdot \widehat{p}_{\mu, \, j} \\
& 
\hspace{23em} \text{ for all \quad $1 \leq i, j \leq n$} . 
\end{align*}
Thus $A_2(T) = \inn_P^{\rm (a)} ( A_1(T) ) $ if and only if 
$h_2(x_{i, j}) = ( h_1 \circ \inn_P^{\rm (b)}  )(x_{i, j}) $ for all $1 \leq i, j \leq n$. 
This equivalence implies the equivalence (1) $\Longleftrightarrow$ (2).

The proof of equivalence  (2) $\Longleftrightarrow$ (3) is straightforward. 
%
%For a regular matrix $P$ of $\GL(n, k)$, it is easy to see that 
%$h_1 = h_2 \circ \inn_P^{\rm (b)} $ if and only if $\varphi_1 = \inn_P^{\rm (c)}  \circ \varphi_2$. 
\end{proof}

%Subsubsection 
\subsubsection{(c) $\cong$ (d)}

We shall give a one-to-one correspondence between sets (c) and (d).

Let $\pi : \GL(n, k) \to \PGL(n, k)$ be the natural surjection. 
Clearly, $\pi$ is a homomorphism of algebraic groups.

\begin{lem}
\label{1.6}
For any homomorphism $\theta : \G_a \to \PGL(n, k)$, 
there exists a unique homomorphism $\varphi : \G_a \to \GL(n, k)$ such that $\pi \circ \varphi = \theta$, 
i.e., the following diagram is commutative: 
\[
\xymatrix@C=36pt{
                                               & \GL(n, k) \ar[d]^{\pi} \\
\G_a \ar[r]_(.3){\theta}  \ar[ru] ^{\varphi}  & \PGL(n, k) 
}
\]
\end{lem}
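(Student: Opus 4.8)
The plan is to present $\pi$ as a central extension and then to exploit the single fact that the units of the polynomial rings $k[T]$ and $k[T, T']$ are the nonzero constants. First I would record that the kernel of $\pi$ is the center of $\GL(n, k)$, namely the group $k^{\ast} I_n$ of nonzero scalar matrices, which is isomorphic to $\G_m$; thus $\pi$ exhibits $\GL(n, k)$ as a central extension of $\PGL(n, k)$ by $\G_m$, equivalently as a principal $\G_m$-bundle. Any lift $\varphi$ we produce will automatically be a homomorphism of algebraic groups once it is both a morphism of varieties and multiplicative, so the task splits into constructing a regular lift and then correcting it to be multiplicative.

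The heart of the argument is the construction of a lift of $\theta$ merely as a morphism of varieties. I would view $\PGL(n, k)$ as the open subvariety of $\mathbb{P}^{n^2 - 1} = \mathbb{P}(\Mat(n, k))$ on which the degree-$n$ determinant form does not vanish, so that $\theta$ becomes a morphism $\G_a \cong \A^1 \to \mathbb{P}^{n^2 - 1}$. Since $\Pic(\A^1) = 0$, the line bundle $\theta^{\ast}\mathcal{O}(1)$ is trivial, hence $\theta$ lifts to a morphism $s : \A^1 \to \A^{n^2} \setminus \{0\}$ given by polynomials $s_{i, j}(T)$ with no common zero and satisfying $[\, (s_{i, j}(T)) \,] = \theta(T)$. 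Because $\theta(T)$ always lands in $\PGL(n, k)$, the polynomial $\det\,(s_{i, j}(T))$ never vanishes on the algebraically closed field $k$ and is therefore a nonzero constant; consequently $s(T) := (s_{i, j}(T)) \in \GL(n, k)$ for every $T$ and $\pi \circ s = \theta$. Rescaling $s$ by a constant, I may assume $s(0) = I_n$. This step---producing a regular lift---is the main obstacle, and it is precisely where the geometry of $\A^1$ (the triviality of its Picard group) enters.

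It then remains to upgrade $s$ to a homomorphism and to prove uniqueness, both of which reduce to the constancy of units. I would consider the matrix-valued regular map $c(T, T') := s(T)\, s(T')\, s(T + T')^{-1}$ on $\G_a \times \G_a$. Applying $\pi$ and using that $\theta$ is a homomorphism shows that $c(T, T')$ lies in $\ker \pi = k^{\ast} I_n$, so $c(T, T') = \lambda(T, T')\, I_n$ with $\lambda$ a regular $k^{\ast}$-valued function on $\A^2$; since the units of $k[T, T']$ are the nonzero constants, $\lambda$ is constant, and $c(0, 0) = s(0) = I_n$ forces $\lambda \equiv 1$. Hence $s(T)\, s(T') = s(T + T')$, so $\varphi := s$ is a homomorphism of algebraic groups with $\pi \circ \varphi = \theta$. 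For uniqueness, if $\varphi_1$ and $\varphi_2$ both lift $\theta$, then $T \mapsto \varphi_1(T)\, \varphi_2(T)^{-1}$ takes values in the central subgroup $k^{\ast} I_n$ and defines a homomorphism $\G_a \to \G_m$; as its coordinate is a unit of $k[T]$ it is a constant equal to $1$, whence $\varphi_1 = \varphi_2$.
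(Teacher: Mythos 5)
Your proof is correct, and it reaches the lemma by a genuinely different route from the paper's, although both arguments ultimately rest on the same two inputs: $\Pic(\A^1) = 0$ to produce a regular (not yet multiplicative) lift, and the fact that the units of $k[T]$ and $k[T, T']$ are the nonzero constants to rigidify it. The paper first factors $\theta$ through its reduced image $X = \theta(\G_a)_\red \cong \A^1$ (invoking Milne's results on images of homomorphisms and smoothness, with a separate treatment of the trivial case), restricts $\pi$ to $E = \pi^{-1}(X)_\red$, a $\G_m$-bundle over $X$, trivializes it by completing to a line bundle and using $\Pic(X) = 0$, takes a section to obtain a regular lift $f$, and then corrects $f$ to a homomorphism via the normalization $\nu(A) = (\det A)^{-1} A$, verifying multiplicativity after composing with $\pi$; uniqueness is settled using $\det \varphi(t) = 1$, citing \cite[Lemmas 1.8 and 1.9]{Tanimoto 2019}. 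You instead sidestep the image factorization entirely by viewing $\PGL(n, k)$ as the open locus $\{\, \det \ne 0 \,\}$ in $\mathbb{P}^{n^2 - 1}$, lifting $\theta$ to a coprime tuple of polynomials (again via $\Pic(\A^1) = 0$), observing that $\det s(T)$ is then automatically a nonzero constant, normalizing $s(0) = I_n$ (legitimate since $s(0) \in \ker \pi = k^{\ast} I_n$), and killing the $k^{\ast}$-valued $2$-cocycle $c(T, T') = s(T)\, s(T')\, s(T + T')^{-1}$ by constancy of units of $k[T, T']$; your uniqueness argument and the paper's are at bottom the same observation, namely that every homomorphism $\G_a \to \G_m$ is trivial. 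What your version buys is uniformity and economy: no trivial/nontrivial case split, no appeal to smoothness of images or to the bundle-theoretic section, and the cocycle computation makes the correction step completely transparent; what the paper's version buys is a more structural picture (the $\G_m$-bundle over the image), and its $\nu$-trick produces the determinant-one lift directly. One small point worth making explicit in your write-up: in the cocycle step, the set-theoretic containment $c(\G_a \times \G_a) \subset k^{\ast} I_n$ upgrades to the identity $c = \lambda(T, T')\, I_n$ of matrices of regular functions because $\G_a \times \G_a$ is reduced and $k^{\ast} I_n$ is closed in $\GL(n, k)$ --- harmless in the classical setting used here, but it is the hinge on which that step turns.
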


\begin{proof}
If $\theta$ is trivial, i.e., $\theta(\G_a) = \{\, e_{\PGL(n, k)} \,\}$, we are done. 
So, assume that $\theta$ is nontrivial. 
The homomorphism $\theta$ factors into a composite of homomorphisms 
\[
\xymatrix@R=36pt@C=36pt@M=6pt{
 \G_a \ar[r]^q & X \ar[r]^(.33)i & \PGL(n, k)  
}
\]
with $q$ a faithfully flat morphism and $i$ a closed embedding (see \cite[Theorem 5.39]{Milne}). 
The image $X = \theta(\G_a)_\red$ is smooth (see \cite[Proposition 1.28 and Corollary 5.26]{Milne}). 
Thus $X$ is isomorphic to the affine line, i.e., $X \cong \A^1$. 
We consider the inverse image $E := \pi^{-1}(X)_\red$ of $X$ under $\pi$. 
So, $\pi|_E : E \to X$ is a $\G_m$-bundle over $X$, 
where $\pi|_E$ is the restriction of $\pi$ to $E$. 
Note that $\pi|_E$ is a trivial $\G_m$-bundle over $X$. 
In fact, there exist a line bundle $\pi' : L \to X$  and an open embedding $\iota : E \to L$ so that $\pi' \circ \iota = \pi|_E$. 
Since $\Pic(X) = 0$, the line bundle $\pi' : L\to X$ is trivial, and thereby $\pi|_E$ is a trivial 
$\G_m$-bundle over $X$. 
We have the following commutative diagram:
\[
\xymatrix@C=60pt@R=36pt@M=8pt{
 E \ar[d]_{\pi|_E} \ar@{^(->}[r]^\iota & L \ar[ld]_(.5){\pi'}  \ar[r]^(.4)\cong & X \times \A^1 \ar[lld]^{(x, a) \mapsto x}  \\
  X
}
\]
Since $E \cong X \times \G_m$, there exists a section $s$ of $\pi|_E$. 
We can define a morphism $f : \G_a \to \GL(n, k)$ as
\[
f := \iota_E \circ s \circ q , 
\]  
where $\iota_E : E \to \GL(n, k)$ is the natural closed immersion. 
Clearly, 
\begin{align}
\tag{\ref{1.6}.1}
\pi \circ f = \theta . 
\end{align} 
Let $\nu : \GL(n, k) \to \GL(n, k)$ be the homomorphism defined by  
\[
 \nu(A) := \frac{1}{\det A} \, A . 
\]
Clearly, 
\begin{align}
\tag{\ref{1.6}.2}
\pi \circ \nu = \pi . 
\end{align} 
Since $\theta$ is a homomorphism, 
we have 
\begin{align}
\tag{\ref{1.6}.3}
m_{\PGL(n, k)} \circ (\theta \times \theta) = \theta \circ m_{\G_a}, 
\end{align}
i.e., 
the following diagram is commutative: 
\[
\xymatrix@R=36pt@C=36pt@M=6pt{
 \G_a \times \G_a \ar[d]_{m_{\G_a}}  \ar[r]^(.35){\theta \times \theta}
 & \PGL(n, k) \times \PGL(n, k) \ar[d]^{m_{\PGL(n, k)}}  \\
%%%%% 
 \G_a \ar[r]^\theta & \PGL(n, k) 
}
\]
We have 
\[
\pi \circ m_{\GL(n, k)} \circ  (\nu \times \nu ) \circ (f \times f) 
 = \pi \circ \nu \circ f \circ m_{\G_a} 
\]
since 
\begin{align*}
\begin{array}{@{\quad} r c l}
\multicolumn{3}{l}{ 
 \pi \circ m_{\GL(n, k)} \circ  (\nu \times \nu ) \circ (f \times f) 
} \\
 & = & m_{\PGL(n, k)} \circ (\pi \times \pi) \circ (\nu \times \nu ) \circ (f \times f) \\
 & \overset{(\ref{1.6}.2)}{=} & m_{\PGL(n, k)} \circ (\pi \times \pi) \circ (f \times f) \\ 
 & \overset{(\ref{1.6}.1)}{=} & m_{\PGL(n, k)} \circ (\theta \times \theta)  \\ 
 & \overset{(\ref{1.6}.3)}{=} & \theta \circ m_{\G_a} \\
 & \overset{(\ref{1.6}.1)}{=} & \pi \circ f \circ m_{\G_a} \\ 
 & \overset{(\ref{1.6}.2)}{=} & \pi \circ \nu \circ f \circ m_{\G_a} . 
\end{array}
\end{align*} 
Now, we have 
\[
  m_{\GL(n, k)} \circ  (\nu \times \nu ) \circ (f \times f) =  \nu \circ f \circ m_{\G_a} . 
\]
Letting $\varphi := \nu \circ f $, we have a homomorphism $\varphi : \G_a \to \GL(n, k)$ so that 
$\pi \circ \varphi = \theta$.

Let $\varphi : \G_a \to \GL(n, k)$  and let $\phi : \G_a \to \GL(n, k)$ 
be homomorphisms so that $\pi \circ \varphi = \theta$ and $\pi \circ \phi = \theta$. 
So, we have $\pi \circ \nu \circ \varphi = \pi \circ \nu \circ \phi$, 
which implies $\nu \circ \varphi = \nu \circ \phi$. 
We can show $\nu \circ \varphi = \varphi$ and $\nu \circ \phi = \phi$ 
since $\det \varphi(t) = \det \phi(t) = 1$ for all $t \in \G_a$ 
(see \cite[Lemmas 1.8 and 1.9]{Tanimoto 2019}). 
Thus $\varphi = \phi$. 
\end{proof}

We can define a map $\varpi :  \Hom_\alggr(\G_a, \GL(n, k)) \to \Hom_\alggr(\G_a, \PGL(n, k) )$ as 
\[
\varpi(\varphi) := \pi \circ \varphi . 
\]

\begin{lem}
The map $\varpi : \Hom_\alggr(\G_a, \GL(n, k)) \to \Hom_\alggr(\G_a, \PGL(n, k))$  is bijective. 
\end{lem}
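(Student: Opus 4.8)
The plan is to read off the bijectivity of $\varpi$ directly from Lemma \ref{1.6}, which already carries all the substantive content. Since $\varpi$ is defined by $\varpi(\varphi) = \pi \circ \varphi$, surjectivity of $\varpi$ is precisely the existence clause of Lemma \ref{1.6}, and injectivity of $\varpi$ is precisely its uniqueness clause. So no new geometry is required; the task reduces to unwinding definitions.

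For surjectivity, I would take an arbitrary $\theta \in \Hom_\alggr(\G_a, \PGL(n, k))$ and apply the existence part of Lemma \ref{1.6} to obtain a homomorphism $\varphi : \G_a \to \GL(n, k)$ of algebraic groups with $\pi \circ \varphi = \theta$. By the definition of $\varpi$ this says $\varpi(\varphi) = \theta$, so every $\theta$ lies in the image of $\varpi$.

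For injectivity, I would start from $\varphi_1, \varphi_2 \in \Hom_\alggr(\G_a, \GL(n, k))$ with $\varpi(\varphi_1) = \varpi(\varphi_2)$, i.e. $\pi \circ \varphi_1 = \pi \circ \varphi_2 =: \theta$. Then $\varphi_1$ and $\varphi_2$ are both lifts of the single homomorphism $\theta$ through $\pi$, and the uniqueness clause of Lemma \ref{1.6} forces $\varphi_1 = \varphi_2$. Together with the previous paragraph this yields the bijectivity of $\varpi$.

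I do not expect a genuine obstacle here, because the only delicate point was already resolved inside Lemma \ref{1.6}: the lift is unique because any homomorphism $\G_a \to \GL(n, k)$ of algebraic groups automatically has determinant identically $1$ (as $\G_a$ carries no nontrivial characters), which is exactly what singles out the lift among the many set-theoretic sections of $\pi$. Consequently I anticipate the final write-up to be a two-line deduction, citing the existence and uniqueness statements of Lemma \ref{1.6} for surjectivity and injectivity respectively.
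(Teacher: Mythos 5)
Your proposal is correct and matches the paper's own argument exactly: the paper's proof of this lemma is simply a citation of Lemma \ref{1.6}, whose existence clause gives surjectivity and whose uniqueness clause gives injectivity, just as you unwind it. Your parenthetical remark that uniqueness rests on $\det \varphi(t) = 1$ for any algebraic-group homomorphism $\varphi : \G_a \to \GL(n, k)$ is likewise the same point the paper makes inside the proof of Lemma \ref{1.6}.
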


\begin{proof}
See Lemma 1.5. 
\end{proof}

\begin{lem}
Let $\theta_i : \G_a \to \PGL(n, k)$ $(i = 1, 2)$ be homomorphisms 
and let $\varphi_i : \G_a \to \GL(n, k)$ $(i = 1, 2)$ be homomorphisms such that $\theta_i =  \pi \circ \varphi_i $ for $i = 1, 2$. 
Then the following conditions {\rm (1)} and {\rm (2)} are equivalent: 
\begin{enumerate}[label = {\rm (\arabic*)}]
\item $\theta_1$ and $\theta_2$ are equivalent. 

\item $\varphi_1$ and $\varphi_2$ are equivalent. 
\end{enumerate}
\end{lem}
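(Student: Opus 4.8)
The plan is to prove the two implications by exploiting a single structural identity relating the two inner-automorphism operations through $\pi$, together with the uniqueness of the lift established in Lemma 1.6. The first thing I would record is the commutation relation
\[
\pi \circ \inn_P^{\rm (c)} = \inn_P^{\rm (d)} \circ \pi
\qquad \text{for every } P \in \GL(n, k),
\]
which is immediate from the definitions, since both sides send $Q \in \GL(n, k)$ to $[\, P \, Q \, P^{-1} \,]$. This identity is the hinge on which both implications turn, so I would verify it at the outset.

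For the implication (2) $\Longrightarrow$ (1), I would start from a regular matrix $P$ with $\varphi_2 = \inn_P^{\rm (c)} \circ \varphi_1$, compose on the left with $\pi$, and use $\theta_i = \pi \circ \varphi_i$ together with the commutation relation to obtain
\[
\theta_2 = \pi \circ \varphi_2 = \pi \circ \inn_P^{\rm (c)} \circ \varphi_1 = \inn_P^{\rm (d)} \circ \pi \circ \varphi_1 = \inn_P^{\rm (d)} \circ \theta_1 ,
\]
so that the same $P$ witnesses the equivalence of $\theta_1$ and $\theta_2$.

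For the converse (1) $\Longrightarrow$ (2), I would take $P$ with $\theta_2 = \inn_P^{\rm (d)} \circ \theta_1$ and consider the map $\inn_P^{\rm (c)} \circ \varphi_1 : \G_a \to \GL(n, k)$, which is a genuine homomorphism of algebraic groups as a composite of $\varphi_1$ with the inner automorphism $\inn_P^{\rm (c)}$. The commutation relation gives $\pi \circ (\inn_P^{\rm (c)} \circ \varphi_1) = \inn_P^{\rm (d)} \circ \theta_1 = \theta_2 = \pi \circ \varphi_2$. Thus $\inn_P^{\rm (c)} \circ \varphi_1$ and $\varphi_2$ are both lifts of $\theta_2$ along $\pi$, and by the uniqueness part of Lemma 1.6 they coincide; hence $\varphi_2 = \inn_P^{\rm (c)} \circ \varphi_1$, and $\varphi_1$ and $\varphi_2$ are equivalent via the same $P$.

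The step I expect to carry the real content is the converse, where invoking the uniqueness of the lift (Lemma 1.6) is essential: without it one cannot pass from an equality of the projected homomorphisms to an equality of their lifts. The only point requiring care is to confirm that $\inn_P^{\rm (c)} \circ \varphi_1$ is a homomorphism $\G_a \to \GL(n, k)$ to which the uniqueness of Lemma 1.6 applies — which it is, since every homomorphism $\G_a \to \GL(n, k)$ automatically has determinant identically $1$, exactly the property underlying the uniqueness argument of that lemma.
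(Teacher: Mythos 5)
Your proposal is correct and follows essentially the same route as the paper: both rest on the commutation relation $\pi \circ \inn_P^{\rm (c)} = \inn_P^{\rm (d)} \circ \pi$, with the easy direction obtained by composing with $\pi$ and the converse obtained by recognizing $\inn_P^{\rm (c)} \circ \varphi_1$ and $\varphi_2$ as lifts of $\theta_2$ and invoking the uniqueness of lifts (the injectivity of $\varpi$, which the paper derives from its lifting lemma). Your closing remark that the determinant of any homomorphism $\G_a \to \GL(n,k)$ is identically $1$ matches the paper's own justification of that uniqueness.
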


\begin{proof}
Let $P \in \PGL(n, k)$ and $Q \in \GL(n, k)$ satisfy $P = \pi(Q)$. 
Clearly, $\inn_P^{\rm (d)}  \circ \pi = \pi \circ \inn_Q^{\rm (c)} $, i.e., the following diagram is commutative: 
\[
\xymatrix@C=36pt@R=36pt@M=6pt{
 \GL(n, k) \ar[r]^{\inn_{Q}^{\rm (c)} } \ar[d]_\pi & \GL(n, k) \ar[d]^\pi \\
 \PGL(n, k) \ar[r]_{\inn_P^{\rm (d)} }   & \PGL(n, k) 
}
\]
If $\theta_2 = \inn_P^{\rm (d)}  \circ \theta_1$, then 
\[
\pi \circ \varphi_2
 = \theta_2 
 = \inn_P^{\rm (d)}  \circ \theta_1
 = \inn_P^{\rm (d)}  \circ \pi \circ \varphi_1 
 = \pi \circ \inn_{Q}^{\rm (c)}  \circ \varphi_1  , 
\]
which implies 
$\varphi_2 = \inn_{Q}^{\rm (c)}  \circ \varphi_1 $ (see Lemma 1.6). 
Conversely, if $\varphi_2 = \inn_{Q}^{\rm (c)}  \circ \varphi_1$, it is clear 
that $\theta_2 = \inn_P^{\rm (d)}  \circ \theta_1$.

%
%We next prove (2) $\Longrightarrow$ (1). 
%Let $Q$ be a regular matrix of $\GL(n, k)$ such that $\inn_{Q} \circ \varphi_1 =  \varphi_2$.  
%Let $P := [Q]$. Then we have $\inn_P \circ \theta_1 =  \theta_2$. 
\end{proof}

%Subsubsection 
\subsubsection{(d) $\cong$ (e)}

We shall give a one-to-one correspondence between sets (d) and (e).

We can define an isomorphism 
$\jmath : \PGL(n, k) \to \Aut(\mathbb{P}^{n - 1})$ of algebraic groups as  
\[
 P
\quad \mapsto \quad  
\bigl(\; 
 (x_0 : x_1 : \cdots : x_{n - 1}) 
 \; \mapsto \; 
 (x_0 : x_1 : \cdots : x_{n - 1}) 
\, {^t}Q 
\; \bigr) ,  
\]
where $P = \pi(Q)$ and $\pi : \GL(n, k) \to \PGL(n, k)$ is the natural surjection.

\begin{lem}
The map $\Hom_\alggr(\G_a, \PGL(n, k)) \to \Hom_\alggr(\G_a, \Aut(\mathbb{P}^{n - 1}))$ 
defined by $\theta \mapsto \jmath \circ \theta$ is a bijection. 
\end{lem}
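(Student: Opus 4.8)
The plan is to leverage the fact, already recorded in the definition preceding the statement, that $\jmath : \PGL(n, k) \to \Aut(\mathbb{P}^{n - 1})$ is an \emph{isomorphism} of algebraic groups. Once this is granted, the lemma reduces to the purely formal observation that post-composition with an isomorphism induces a bijection on the relevant Hom-sets. First I would write down the candidate inverse: since $\jmath$ is an isomorphism of algebraic groups, it admits an inverse morphism $\jmath^{-1} : \Aut(\mathbb{P}^{n - 1}) \to \PGL(n, k)$, which is again a homomorphism of algebraic groups. I then define
$\Phi : \Hom_\alggr(\G_a, \Aut(\mathbb{P}^{n - 1})) \to \Hom_\alggr(\G_a, \PGL(n, k))$ by $\vartheta \mapsto \jmath^{-1} \circ \vartheta$, and claim that $\Phi$ is inverse to $\theta \mapsto \jmath \circ \theta$.

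Next I would check that both maps are well defined, i.e.\ that they really land in the stated Hom-sets. This rests only on the fact that a composite of two homomorphisms of algebraic groups is again a homomorphism of algebraic groups: for $\theta \in \Hom_\alggr(\G_a, \PGL(n, k))$ the composite $\jmath \circ \theta$ lies in $\Hom_\alggr(\G_a, \Aut(\mathbb{P}^{n - 1}))$, and symmetrically $\jmath^{-1} \circ \vartheta \in \Hom_\alggr(\G_a, \PGL(n, k))$ for every $\vartheta$ in the target.

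Finally I would verify that the two assignments are mutually inverse, which is immediate from the associativity of composition together with $\jmath^{-1} \circ \jmath = \id_{\PGL(n, k)}$ and $\jmath \circ \jmath^{-1} = \id_{\Aut(\mathbb{P}^{n - 1})}$: for any $\theta$ one gets $\jmath^{-1} \circ (\jmath \circ \theta) = \theta$, and for any $\vartheta$ one gets $\jmath \circ (\jmath^{-1} \circ \vartheta) = \vartheta$. Hence the map $\theta \mapsto \jmath \circ \theta$ is a bijection, as asserted.

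The only genuine content hides behind the word ``isomorphism'' in the definition of $\jmath$, namely the classical identification $\Aut(\mathbb{P}^{n - 1}) \cong \PGL(n, k)$ as algebraic groups (every automorphism of $\mathbb{P}^{n - 1}$ is induced by an element of $\PGL(n, k)$, and both $\jmath$ and $\jmath^{-1}$ are morphisms of varieties). Were one to prove the lemma from first principles, establishing that $\jmath$ is an isomorphism of algebraic groups would be the main obstacle; but since the statement declares $\jmath$ to be such an isomorphism, I may take this for granted and the argument above is complete.
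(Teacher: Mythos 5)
Your proposal is correct and matches the paper's approach: the paper defines $\jmath$ as an isomorphism of algebraic groups and dismisses the lemma with ``The proof is straightforward,'' and your argument (post-composition with $\jmath$ and $\jmath^{-1}$ giving mutually inverse maps on the Hom-sets) is exactly the intended routine verification. Nothing is missing.
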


\begin{proof}
The proof is straightforward. 
\end{proof}

\begin{lem}
Let $\theta_i$ $( i = 1, 2)$ be elements of $ \Hom_\alggr(\G_a, \PGL(n, k))$  
and let $\vartheta_i := \jmath \circ \theta_i$ $(i = 1, 2)$ be elements of 
$\Hom_\alggr(\G_a, \Aut(\mathbb{P}^{n - 1}))$. 
Then the following conditions {\rm (1)} and {\rm (2)} are equivalent: 
\begin{enumerate}[label = {\rm (\arabic*)}]
\item $\theta_1$ and $\theta_2$ are equivalent. 

\item $\vartheta_1$ and $\vartheta_2$ are equivalent. 
\end{enumerate}
\end{lem}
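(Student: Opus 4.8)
The plan is to exploit the single fact recorded just above, namely that $\jmath : \PGL(n, k) \to \Aut(\mathbb{P}^{n - 1})$ is an isomorphism of algebraic groups; once this is known, the equivalence (1) $\Longleftrightarrow$ (2) becomes a purely formal matter, because a group isomorphism automatically intertwines inner automorphisms. Concretely, I would first isolate the compatibility
\[
 \jmath \circ \inn_P^{\rm (d)} = \inn_{\jmath([P])}^{\rm (e)} \circ \jmath
 \qquad \text{for every $P \in \GL(n, k)$,}
\]
where $[P] := \pi(P) \in \PGL(n, k)$. The point is that $\inn_P^{\rm (d)}$ is nothing but conjugation by $[P]$ inside $\PGL(n, k)$, since $\inn_P^{\rm (d)}([Q]) = [P Q P^{-1}] = [P]\,[Q]\,[P]^{-1}$, while $\inn_\sigma^{\rm (e)}$ is conjugation by $\sigma$ inside $\Aut(\mathbb{P}^{n - 1})$. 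As $\jmath$ is a homomorphism of groups it satisfies $\jmath(g\,x\,g^{-1}) = \jmath(g)\,\jmath(x)\,\jmath(g)^{-1}$, and taking $g = [P]$ yields exactly the displayed identity; alternatively the same identity can be verified by direct substitution into the explicit formula $P \mapsto \bigl( (x_0 : \cdots : x_{n-1}) \mapsto (x_0 : \cdots : x_{n-1})\, {}^{t}P \bigr)$ defining $\jmath$.

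With this compatibility in hand I would dispatch the two implications. For (1) $\Longrightarrow$ (2), assume $\theta_2 = \inn_P^{\rm (d)} \circ \theta_1$ for some $P \in \GL(n, k)$; composing on the left with $\jmath$ gives
\[
 \vartheta_2 = \jmath \circ \theta_2 = \jmath \circ \inn_P^{\rm (d)} \circ \theta_1 = \inn_{\jmath([P])}^{\rm (e)} \circ \jmath \circ \theta_1 = \inn_\sigma^{\rm (e)} \circ \vartheta_1,
\]
with $\sigma := \jmath([P]) \in \Aut(\mathbb{P}^{n - 1})$, so $\vartheta_1$ and $\vartheta_2$ are equivalent. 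For (2) $\Longrightarrow$ (1), assume $\vartheta_2 = \inn_\sigma^{\rm (e)} \circ \vartheta_1$ for some $\sigma \in \Aut(\mathbb{P}^{n - 1})$. Since $\jmath$ is surjective and $\pi$ is surjective, I may write $\sigma = \jmath([P])$ for some $P \in \GL(n, k)$; the same compatibility then gives $\jmath \circ \theta_2 = \jmath \circ \inn_P^{\rm (d)} \circ \theta_1$, and cancelling the injective map $\jmath$ on the left yields $\theta_2 = \inn_P^{\rm (d)} \circ \theta_1$.

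I expect no genuine obstacle here: the entire content is formal once $\jmath$ is an isomorphism of groups, the only features being used beyond the homomorphism property are its surjectivity (to produce $P$ from $\sigma$ in the backward direction) and its injectivity (to cancel $\jmath$), both part of its being an isomorphism. The single step worth stating cleanly is the intertwining identity above, which is where the whole argument is concentrated.
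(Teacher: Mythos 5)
Your proof is correct and takes essentially the same route as the paper: both rest on the intertwining identity $\jmath \circ \inn_P^{\rm (d)} = \inn_{\jmath(\pi(P))}^{\rm (e)} \circ \jmath$ (the paper states it as $\inn^{\rm (e)}_{\jmath(P)} = \jmath \circ \inn^{\rm (d)}_P \circ \jmath^{-1}$ and reads the equivalence off the resulting commutative square), after which everything is formal. Your write-up merely makes explicit what the paper leaves implicit, namely the use of injectivity of $\jmath$ to cancel it on the left and of surjectivity of $\jmath$ and $\pi$ to lift $\sigma$ to a matrix $P$ in the converse direction.
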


\begin{proof} 
For any element $P \in \PGL(n, k)$, we have 
$
 \inn^{\rm (e)}_{\jmath(P)}
= 
\jmath  \circ \inn^{\rm (d)} _P \circ \jmath^{-1} 
$. 
%In fact, for any element $\sigma$ of $\Aut(\mathbb{P}^{n - 1})$, we can express $\sigma$ as 
%$\sigma = \jmath(s)$ for some $s \in \PGL(n, k)$ and then have 
%\begin{align*}
% \inn^{\rm (e)}_{\jmath(P)}(\sigma) 
% &= \jmath(P)  \circ \sigma \circ \jmath(P)^{-1} 
% = \jmath(P) \circ \jmath(s) \circ \jmath(P)^{-1} 
% = \jmath(\, P \cdot s \cdot P^{-1} \,) \\
% & = \jmath(\, \inn^{\rm (d)}_P(s) \,) 
%  = \jmath(\, \inn^{\rm (d)} _P( \, \jmath^{-1}(\sigma) \,) \,)
% = (\jmath \circ \inn^{\rm (d)} _P \circ \jmath^{-1})(\sigma) . 
%\end{align*}
So, the following diagram is commutative: 
\begin{align*}
\xymatrix@C=36pt@R=12pt@M=6pt{
  \PGL(n, k)   \ar[r]^{\jmath} \ar[dd]_{\inn^{\rm (d)} _P} 
 & \Aut(\mathbb{P}^{n - 1})  \ar[dd]^{ \inn^{\rm (e)} _{\jmath(P)} }  \\
%%%
  & \\
%%%
   \PGL(n, k)  \ar[r]^{\jmath}
 &  \Aut(\mathbb{P}^{n - 1}) 
}
\end{align*} 
This diagram implies the equivalence (1) $\Longleftrightarrow$ (2). 
\end{proof}

%Subsubsection 
\subsubsection{(e) $\cong$ (f)}

We shall give a one-to-one correspondence between sets (e) and (f).

We can define a map $\Phi : \Hom_\alggr(\G_a, \Aut(\mathbb{P}^{n - 1}) ) \to \{\, \text{$\G_a$-actions on $\mathbb{P}^{n - 1}$} \,\} $ as 
\[
 \Phi(\vartheta) := \mu_\vartheta ,  
 \qquad 
\mu_\vartheta(t, x) := \vartheta(t) (x) . 
\] 
So, the following diagram is commutative, where $\mu_{\mathbb{P}^{n - 1}}(f, x) := f(x)$, 
where $\id_{\mathbb{P}^{n - 1}} : \mathbb{P}^{n - 1} \to \mathbb{P}^{n - 1}$ denotes  
the identity map: 
\[
\xymatrix@R=36pt@C=36pt@M=6pt{
 \G_a \times \mathbb{P}^{n - 1} \ar[r]^{\mu_\theta} 
\ar[d]_{\vartheta \times \id_{\mathbb{P}^{n - 1}}} & \mathbb{P}^{n - 1} \\
  \Aut( \mathbb{P}^{n - 1} ) \times \mathbb{P}^{n - 1} \ar[ru]_{\mu_{\mathbb{P}^{n - 1}}} & 
}
\]
%
%
%\[
%\xymatrix@R=36pt@C=36pt@M=6pt{
% \G_a \times \mathbb{P}^{n - 1} \ar[r]^(.43){ \vartheta \times \id_{\mathbb{P}^{n - 1}} } 
%\ar@/_24pt/[rr]_{\mu_\vartheta}
% & 
% \Aut( \mathbb{P}^{n - 1} ) \times \mathbb{P}^{n - 1} \ar[r]^(.65){\mu_{\mathbb{P}^{n - 1}}} 
%  & \mathbb{P}^{n - 1}
%}
%\]

We can define a map $\Psi :  \{\, \text{$\G_a$-actions on 
$\mathbb{P}^{n - 1}$} \,\}  \to   \Hom_\alggr(\G_a, \Aut(\mathbb{P}^{n - 1}) )$ as 
\[
 \Psi(\mu) := \vartheta_\mu ,  
 \qquad \vartheta_\mu(t) := (\, x \mapsto \mu(t, x) \,) \in \Aut(\mathbb{P}^{n - 1}) . 
\]

\begin{lem}
$\Psi \circ \Phi = \id$ and $\Phi \circ \Psi = \id$. 
\end{lem}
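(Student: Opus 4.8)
The plan is to verify both identities by a direct computation on points, unwinding the defining formulas for $\Phi$ and $\Psi$. Since $k$ is algebraically closed, and both $\G_a$-actions on $\mathbb{P}^{n-1}$ and homomorphisms $\G_a \to \Aut(\mathbb{P}^{n-1})$ are determined by their values on closed points, it suffices to check equality of the relevant morphisms pointwise.

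First I would prove $\Psi \circ \Phi = \id$. Take $\vartheta \in \Hom_\alggr(\G_a, \Aut(\mathbb{P}^{n-1}))$ and set $\mu_\vartheta := \Phi(\vartheta)$, so that $\mu_\vartheta(t, x) = \vartheta(t)(x)$ for all $t \in \G_a$ and $x \in \mathbb{P}^{n-1}$. Applying $\Psi$ gives $\vartheta_{\mu_\vartheta} := \Psi(\mu_\vartheta)$ with $\vartheta_{\mu_\vartheta}(t) = (x \mapsto \mu_\vartheta(t, x)) = (x \mapsto \vartheta(t)(x)) = \vartheta(t)$. Thus $\vartheta_{\mu_\vartheta}(t) = \vartheta(t)$ for every $t$, and hence $\Psi(\Phi(\vartheta)) = \vartheta$.

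Then I would prove $\Phi \circ \Psi = \id$ in the symmetric way. For a $\G_a$-action $\mu$ on $\mathbb{P}^{n-1}$, set $\vartheta_\mu := \Psi(\mu)$, so that $\vartheta_\mu(t) = (x \mapsto \mu(t, x))$. Applying $\Phi$ yields $\mu_{\vartheta_\mu} := \Phi(\vartheta_\mu)$ with $\mu_{\vartheta_\mu}(t, x) = \vartheta_\mu(t)(x) = \mu(t, x)$ for all $t$ and $x$, whence $\Phi(\Psi(\mu)) = \mu$.

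The computation itself is entirely formal; the only point requiring care---and the nearest thing to an obstacle---is confirming that the two maps are well-defined before composing them, i.e.\ that $\mu_\vartheta$ satisfies the $\G_a$-action axioms (which follows from $\vartheta$ being a homomorphism of algebraic groups, together with the factorization recorded in the commutative diagram defining $\Phi$) and that $\vartheta_\mu$ lands in $\Hom_\alggr(\G_a, \Aut(\mathbb{P}^{n-1}))$ (each $\vartheta_\mu(t)$ is invertible with inverse $\vartheta_\mu(-t)$, and $t \mapsto \vartheta_\mu(t)$ respects the group law). These facts are already built into the definitions of $\Phi$ and $\Psi$ given above the lemma, so the verification of the two identities reduces to the pointwise chase above.
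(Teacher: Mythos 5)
Your proof is correct: the pointwise unwinding of the definitions of $\Phi$ and $\Psi$, together with the remark that well-definedness is already built into the definitions, is precisely the routine verification the paper has in mind when it states ``The proof is straightforward.'' No difference in approach; your write-up simply makes the omitted details explicit.
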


\begin{proof}
The proof is straightforward. 
\end{proof}

\begin{lem} 
Let $\vartheta_i $ $(i = 1, 2)$ be elements of 
$\Hom_\alggr(\G_a, \Aut(\mathbb{P}^{n - 1}))$. 
For any $\sigma \in \Aut(\mathbb{P}^{n - 1})$, 
the following conditions {\rm (1)} and {\rm (2)} are equivalent: 
\begin{enumerate}[label = {\rm (\arabic*)}]
\item $\vartheta_2 = \inn_\sigma^{\rm (e)}  \circ \vartheta_1$. 

\item The following diagram is commutative: 
\[
\xymatrix@R=36pt@C=36pt@M=6pt{
 \G_a \times \mathbb{P}^{n - 1} \ar[r]^(.43){\vartheta_1 \times \id_{\mathbb{P}^{n - 1}}} \ar[d]_{\id_{\G_a} \times \sigma}
 & \Aut(\mathbb{P}^{n - 1}) \times \mathbb{P}^{n - 1} \ar[r]^(.68){\mu_{\mathbb{P}^{n - 1}}} 
 & \mathbb{P}^{n - 1} \ar[d]^\sigma \\
%%%%%%
 \G_a \times \mathbb{P}^{n - 1} \ar[r]^(.43){\vartheta_2 \times \id_{\mathbb{P}^{n - 1}}} 
 & \Aut(\mathbb{P}^{n - 1}) \times \mathbb{P}^{n - 1} \ar[r]^(.68){\mu_{\mathbb{P}^{n - 1}}}
 & \mathbb{P}^{n - 1} 
}
\]
\end{enumerate} 
\end{lem}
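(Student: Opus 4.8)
The plan is to reduce both conditions to a single pointwise identity of automorphisms of $\mathbb{P}^{n-1}$ and then observe that the two reductions coincide. Since every object appearing in the diagram is a variety over the algebraically closed field $k$, equality of two morphisms may be tested on $k$-points; I will therefore perform the whole verification by evaluating on a general point $(t, x) \in \G_a \times \mathbb{P}^{n-1}$.

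First I would unwind condition (1). By the definition of $\inn_\sigma^{\rm (e)}$, the equality $\vartheta_2 = \inn_\sigma^{\rm (e)} \circ \vartheta_1$ means precisely that $\vartheta_2(t) = \sigma \circ \vartheta_1(t) \circ \sigma^{-1}$ in $\Aut(\mathbb{P}^{n-1})$ for every $t \in \G_a$, which after composing with $\sigma$ on the right is equivalent to $\vartheta_2(t) \circ \sigma = \sigma \circ \vartheta_1(t)$ for every $t$.

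Next I would chase the diagram in condition (2). Recalling that $\mu_{\mathbb{P}^{n-1}}(f, x) = f(x)$, the composite along the top-and-right edges sends $(t, x)$ to $\sigma\bigl(\vartheta_1(t)(x)\bigr) = (\sigma \circ \vartheta_1(t))(x)$, while the composite along the left-and-bottom edges sends $(t, x)$ to $\vartheta_2(t)\bigl(\sigma(x)\bigr) = (\vartheta_2(t) \circ \sigma)(x)$. Hence the diagram commutes if and only if $(\vartheta_2(t) \circ \sigma)(x) = (\sigma \circ \vartheta_1(t))(x)$ for all $(t, x)$, that is, if and only if $\vartheta_2(t) \circ \sigma = \sigma \circ \vartheta_1(t)$ for all $t \in \G_a$.

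Finally I would simply remark that the pointwise identities extracted from (1) and from (2) are literally the same, which yields the desired equivalence (1) $\Longleftrightarrow$ (2). There is no genuine obstacle here; the only point worth a word is the reduction of the commutativity of a diagram of morphisms of varieties to an identity on $k$-points, which is legitimate because $k$ is algebraically closed and the schemes in sight are reduced, so a morphism is determined by its values on closed points.
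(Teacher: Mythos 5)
Your proof is correct and is exactly the routine diagram chase the paper has in mind: the paper itself dismisses this lemma with ``The proof is straightforward,'' and your evaluation on $k$-points, reducing both conditions to the identity $\vartheta_2(t)\circ\sigma = \sigma\circ\vartheta_1(t)$ for all $t\in\G_a$, spells out precisely that omitted argument. Your remark justifying the passage to closed points (the varieties are reduced and $k$ is algebraically closed) is a sensible touch of care, not a deviation.
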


\begin{proof}
The proof is straightforward. 
\end{proof}

\begin{lem} 
Let $\vartheta_i $ $(i = 1, 2)$ be elements of 
$\Hom_\alggr(\G_a, \Aut(\mathbb{P}^{n - 1}))$. 
Let $\mu_i := \Phi(\vartheta_i)$ $(i = 1, 2)$ be $\G_a$-actions on $\mathbb{P}^{n - 1}$. 
Then the following conditions {\rm (1)} and {\rm (2)} are equivalent: 
\begin{enumerate}[label = {\rm (\arabic*)}]
\item $\mu_1$ and $\mu_2$ are equivalent. 

\item $\vartheta_1$ and $\vartheta_2$ are equivalent. 
\end{enumerate} 
\end{lem}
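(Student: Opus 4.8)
The plan is to reduce both notions of equivalence to a single diagram-commutativity statement parametrized by a choice of automorphism $\sigma \in \Aut(\mathbb{P}^{n - 1})$, and then observe that the existential quantification over $\sigma$ makes the two statements literally identical. The key observation is that, by the definition of $\Phi$ together with the commutative triangle displayed in the subsubsection on (e) $\cong$ (f), each action $\mu_i = \Phi(\vartheta_i)$ factors as
\[
 \mu_i = \mu_{\mathbb{P}^{n - 1}} \circ (\vartheta_i \times \id_{\mathbb{P}^{n - 1}}) ,
\]
so that the two horizontal composites appearing in the preceding lemma are exactly $\mu_1$ and $\mu_2$.

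First I would fix an automorphism $\sigma \in \Aut(\mathbb{P}^{n - 1})$ and rewrite the commutative square occurring in the definition of equivalence of $\G_a$-actions, namely $\mu_2 \circ (\id_{\G_a} \times \sigma) = \sigma \circ \mu_1$, by substituting the factorization above. This turns that square into precisely the outer rectangle of the preceding lemma, whose commutativity for this same $\sigma$ is there shown to be equivalent to $\vartheta_2 = \inn_\sigma^{\rm (e)} \circ \vartheta_1$. Hence, for each individual $\sigma$, the relation $\mu_2 \circ (\id_{\G_a} \times \sigma) = \sigma \circ \mu_1$ holds if and only if $\vartheta_2 = \inn_\sigma^{\rm (e)} \circ \vartheta_1$ holds.

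Finally I would quantify existentially over $\sigma$. Condition (1) of the lemma asserts the existence of some $\sigma$ realizing the first relation, and condition (2) asserts the existence of some $\sigma$ realizing the second; by the $\sigma$-by-$\sigma$ equivalence just established, these two existence statements coincide, which yields (1) $\Longleftrightarrow$ (2).

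I do not anticipate a genuine obstacle here: once the factorization $\mu_i = \mu_{\mathbb{P}^{n - 1}} \circ (\vartheta_i \times \id_{\mathbb{P}^{n - 1}})$ is in place, the statement is an immediate repackaging of the preceding lemma. The only point requiring care is bookkeeping, namely ensuring that the \emph{same} automorphism $\sigma$ plays both roles in the two definitions of equivalence; this is automatic, since both definitions range over the one group $\Aut(\mathbb{P}^{n - 1})$, so the per-$\sigma$ equivalence lifts cleanly to the existential statements.
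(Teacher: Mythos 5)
Your proposal is correct and takes essentially the same route as the paper: the paper's proof consists of two diagrams that encode exactly your factorization $\mu_i = \mu_{\mathbb{P}^{n-1}} \circ (\vartheta_i \times \id_{\mathbb{P}^{n-1}})$, reducing each implication to the outer rectangle of the preceding lemma for the same $\sigma$. Your explicit remark that the existential quantification over $\sigma$ ranges over the single group $\Aut(\mathbb{P}^{n-1})$ in both definitions is the only bookkeeping point, and you handle it correctly.
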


\begin{proof}
The implication (1) $\Longrightarrow$ (2) follows from the following diagram: 
\[
\xymatrix@R=84pt@C=36pt@M=6pt{
 \G_a \times \mathbb{P}^{n - 1} \ar[r]^(.43){ \vartheta_1 \times \id_{\mathbb{P}^{n - 1}} } 
\ar@/_36pt/[rr]_{\mu_1} \ar[d]_{\id_{\G_a} \times \sigma}
 & 
 \Aut( \mathbb{P}^{n - 1} ) \times \mathbb{P}^{n - 1} \ar[r]^(.65){\mu_{\mathbb{P}^{n - 1}}} 
  & \mathbb{P}^{n - 1} \ar[d]^\sigma \\
%%%%% 
 \G_a \times \mathbb{P}^{n - 1} \ar[r]_(.43){ \vartheta_2 \times \id_{\mathbb{P}^{n - 1}} } 
\ar@/^36pt/[rr]^{\mu_2} 
 & 
 \Aut( \mathbb{P}^{n - 1} ) \times \mathbb{P}^{n - 1} \ar[r]_(.65){\mu_{\mathbb{P}^{n - 1}}} 
  & \mathbb{P}^{n - 1} 
}
\]

The implication (2) $\Longrightarrow$ (1) follows from the following diagram: 
\[
\xymatrix@R=48pt@C=36pt@M=6pt{
 \G_a \times \mathbb{P}^{n - 1} \ar[r]^(.43){ \vartheta_1 \times \id_{\mathbb{P}^{n - 1}} } 
\ar@/^36pt/[rr]^{\mu_1} \ar[d]_{\id_{\G_a} \times \sigma}
 & 
 \Aut( \mathbb{P}^{n - 1} ) \times \mathbb{P}^{n - 1} \ar[r]^(.65){\mu_{\mathbb{P}^{n - 1}}} 
  & \mathbb{P}^{n - 1} \ar[d]^\sigma \\
%%%%% 
 \G_a \times \mathbb{P}^{n - 1} \ar[r]_(.43){ \vartheta_2 \times \id_{\mathbb{P}^{n - 1}} } 
\ar@/_36pt/[rr]_{\mu_2} 
 & 
 \Aut( \mathbb{P}^{n - 1} ) \times \mathbb{P}^{n - 1} \ar[r]_(.65){\mu_{\mathbb{P}^{n - 1}}} 
  & \mathbb{P}^{n - 1} 
}
\]
\end{proof}

%Section 
\section{Modular representations of elementary abelian $p$-groups and exponential matrices}

From now on, we assume that the characteristic $p$ of $k$ is positive.

%Subsection 
\subsection{Preparations for proving Theorem 0.2}

%Subsubsection 
\subsubsection{$\E_r(n, k)$, $\E_{\geq 0}(n, k)$}

For any $r \geq 0$ and $n \geq 1$, we denote by $\E_r(n, k)$ 
the set of all group homomorphisms from $(\Z/p\Z)^r$ to $\GL(n, k)$ 
and denote by $\E_{\geq 0}(n, k)$ the direct sum of $\E_r(n, k)$, where $r$ ranges over all non-negative integers, 
i.e.,  
\begin{align*}
\renewcommand{\arraystretch}{1.5}
\left\{
\begin{array}{r @{\,} l}
 \E_r(n, k) &:= \Hom_\gr((\Z/p\Z)^r, \GL(n, k)) \qquad \text{ for all \quad $r \geq 0$} , \\
 \E_{\geq 0}(n, k) & := \ds \coprod_{r \geq 0} \E_r(n, k). 
 \end{array}
 \right. 
\end{align*} 
Clearly, $\E_0(n, k)$ consists of one element, which we denote by $\rho_0$.

%Subsubsection 
\subsubsection{$\cN_r(n, k)$, $\rho_{N_1, \ldots, N_r}$}

For any $r \geq 1$ and $n \geq 1$, we let 
\begin{align*}
\cN_r(n, k) &:= 
\left\{\, (N_1, \ldots, N_r) \in \Mat(n, k)^r \; \left| \;
\begin{array}{l}
\text{$N_i^p = O_n$ for all $1 \leq i \leq r$} , \\
\text{$N_i \, N_j = N_j \, N_i$ for all $1 \leq i, j \leq r$}
\end{array} 
\right. \, \right\} . 
\end{align*}
For any element $(N_1, \ldots, N_r)$ of $\cN_r(n, k)$, 
we can define an element $\rho_{N_1, \ldots, N_r}$ of $\E_r(n, k)$ as 
\[
\rho_{N_1, \ldots, N_r}(a_1, \ldots, a_r) 
:= \prod_{i = 1}^r (I_n + N_i)^{a_i} . 
\]

\begin{lem}
Assume $r \geq 1$. The map $\cN_r(n, k) \to \E_r(n, k)$ defined by 
$(N_1, \ldots, N_r) \mapsto \rho_{N_1, \ldots, N_r}$ is a bijection. 
%
%For any element $\rho$ of $\E_r(n, k)$, 
%there exists a unique element $(N_1, \ldots, N_r)$ of $\cN_r(n, k)$ such that $\rho = \rho_{N_1, \ldots, N_r}$. 
\end{lem}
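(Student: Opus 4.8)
The plan is to prove bijectivity by exhibiting an explicit two-sided inverse, the assignment $\rho \mapsto (\rho(e_1) - I_n, \ldots, \rho(e_r) - I_n)$, where $e_i$ denotes the standard $i$-th generator $(0, \ldots, 0, 1, 0, \ldots, 0)$ of $(\Z/p\Z)^r$. Throughout, the decisive algebraic fact is the characteristic-$p$ binomial identity $(I_n + N)^p = I_n + N^p$, valid for every $N \in \Mat(n, k)$ because $\binom{p}{j}$ vanishes in $k$ for $1 \leq j \leq p - 1$. First I would record that the map is well-defined: writing $g_i := I_n + N_i$, the hypothesis $N_i^p = O_n$ yields $g_i^p = I_n$, so each power $g_i^{a_i}$ depends only on $a_i \bmod p$, while $N_i N_j = N_j N_i$ forces the $g_i$ to commute. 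Hence $\prod_i g_i^{a_i} \cdot \prod_i g_i^{b_i} = \prod_i g_i^{a_i + b_i}$, and $\rho_{N_1, \ldots, N_r}$ is a genuine homomorphism $(\Z/p\Z)^r \to \GL(n, k)$, i.e.\ a member of $\E_r(n, k)$.

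For injectivity I would evaluate $\rho_{N_1, \ldots, N_r}$ at the generator $e_i$, obtaining $\rho_{N_1, \ldots, N_r}(e_i) = I_n + N_i$. Thus every $N_i$ is recovered from the homomorphism it induces, so two tuples of $\cN_r(n, k)$ inducing the same homomorphism must coincide.

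For surjectivity, given an arbitrary $\rho \in \E_r(n, k)$, I would set $N_i := \rho(e_i) - I_n$ and verify the defining conditions of $\cN_r(n, k)$. Commutativity $N_i N_j = N_j N_i$ follows from $\rho(e_i) \rho(e_j) = \rho(e_j) \rho(e_i)$, itself a consequence of the commutativity of $(\Z/p\Z)^r$ together with $\rho$ being a homomorphism. The relation $N_i^p = O_n$ follows from $(I_n + N_i)^p = \rho(e_i)^p = \rho(p\, e_i) = \rho(0) = I_n$ combined with the identity $(I_n + N_i)^p = I_n + N_i^p$. Hence $(N_1, \ldots, N_r) \in \cN_r(n, k)$. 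Finally, since each element of $(\Z/p\Z)^r$ equals $\sum_i a_i e_i$ and $\rho$ is a homomorphism with pairwise commuting images, $\rho(a_1, \ldots, a_r) = \prod_i \rho(e_i)^{a_i} = \prod_i (I_n + N_i)^{a_i} = \rho_{N_1, \ldots, N_r}(a_1, \ldots, a_r)$, so $\rho = \rho_{N_1, \ldots, N_r}$.

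The only substantive ingredient is the identity $(I_n + N)^p = I_n + N^p$ in characteristic $p$, which is the bridge translating ``the generator $\rho(e_i)$ has order dividing $p$'' into ``$N_i$ satisfies $N_i^p = O_n$''; everything else is the formal check that $N_i = \rho(e_i) - I_n$ inverts the given assignment. I expect no real obstacle beyond carefully tracking the commutativity needed to rearrange the products $\prod_i g_i^{a_i}$.
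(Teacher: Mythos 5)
Your proof is correct, and it is precisely the routine verification that the paper compresses into the single line ``The proof is straightforward'': the inverse map $\rho \mapsto (\rho(e_1) - I_n, \ldots, \rho(e_r) - I_n)$ together with the characteristic-$p$ identity $(I_n + N)^p = I_n + N^p$ is the standard dictionary between commuting tuples of $p$-nilpotent matrices and representations of $(\Z/p\Z)^r$. Nothing in your argument deviates from or adds to what the paper intends; you have simply written out the details it omits.
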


\begin{proof}
The proof is straightforward. 
\end{proof}

%Subsubsection 
\subsubsection{$\pi_r$, $\pi$}

For any $r \geq 0$, we can define a map 
\[
\pi_r : \E_r(n, k) 
 \to 
 \Mat(n, k[T])^E 
\]
by separating the cases $r = 0$ and $r \geq 1$. 
In the case where $r = 0$, we let $\pi_0(\rho_0) := I_n$. 
In the case where $r \geq 1$, for any element $\rho$ of $\E_r(n, k)$, 
there exists a unique element $(N_1, \ldots, N_r)$ of $\cN_r(n, k)$ such that $\rho = \rho_{N_1, \ldots, N_r}$. 
Let $A_\rho(T)$ be the matrix of $\Mat(n, k[T])^E$ defined by  
\[
 A_\rho(T) := \prod_{i = 1}^r \Exp(T^{p^{i - 1}} \, N_i) . 
\]
Clearly, $A_\rho(T)$ is an exponential matrix. 
Thus we can define $\pi_r(\rho)$ as 
\[
 \pi_r(\rho) := A_\rho(T) .
\]

Now, let $\pi : \E_{\geq 0}(n, k) \to \Mat(n, k[T])^E$ be the map defined by 
\[
\pi (\rho) := \pi_r(\rho) \qquad \text{ if \quad $\rho \in \E_r(n, k)$}  . 
\]

\begin{lem}
$\pi$ is surjective. 
\end{lem}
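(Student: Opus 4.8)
The plan is to prove surjectivity by showing that every exponential matrix can be reconstructed, via the truncated exponential, from its coefficient matrices. Given $A(T) \in \Mat(n, k[T])^E$, write $A(T) = \sum_{m \geq 0} A_m T^m$ with $A_m \in \Mat(n, k)$; this is a finite sum, and $A(0) = I_n$ forces $A_0 = I_n$. If $A(T) = I_n$, then $A(T) = \pi_0(\rho_0)$ lies in the image, so I may assume $A(T) \neq I_n$ and set $d := \deg A(T) \geq 1$.

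First I would encode the exponential law as relations among the $A_m$. Expanding $A(T)\,A(T') = A(T + T')$ and comparing the coefficients of $T^a {T'}^b$ yields
\[
A_a \, A_b = \binom{a + b}{a}\, A_{a + b} \qquad \text{for all } a, b \geq 0 .
\]
Two consequences follow at once. Since $\binom{a+b}{a} = \binom{a+b}{b}$, the matrices $A_a$ and $A_b$ commute for all $a, b$. Iterating the relation gives $A_{p^{i-1}}^{\,p} = c_i\, A_{p^i}$ with $c_i = (p^i)! / \bigl((p^{i-1})!\bigr)^p$; a short computation of $p$-adic valuations shows $\ord_p(c_i) = 1$, hence $c_i = 0$ in $k$ and $A_{p^{i-1}}^{\,p} = O_n$. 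I therefore set $N_i := A_{p^{i-1}}$ for $i \geq 1$ and choose $r$ minimal with $p^{r} > d$, so that $A_m = O_n$ for $m > d$ and in particular $N_i = O_n$ for $i > r$. By the above, $(N_1, \ldots, N_r) \in \cN_r(n, k)$, so by Lemma 2.1 it determines $\rho := \rho_{N_1, \ldots, N_r} \in \E_r(n, k)$ with $\pi_r(\rho) = \prod_{i=1}^r \Exp(T^{p^{i-1}} N_i)$.

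It remains to verify $A(T) = \prod_{i=1}^r \Exp(T^{p^{i-1}} N_i)$. Since $N_i^p = O_n$, each factor is the finite sum $\Exp(T^{p^{i-1}} N_i) = \sum_{j=0}^{p-1} (j!)^{-1}\, T^{j p^{i-1}}\, N_i^{\,j}$. Fix $m$ and write its base-$p$ expansion $m = \sum_{i=1}^{r} m_{i-1}\, p^{i-1}$ with $0 \leq m_{i-1} \leq p - 1$; then the coefficient of $T^m$ in the product is $\prod_{i=1}^r (m_{i-1}!)^{-1} N_i^{\,m_{i-1}}$. The divided-power relation, combined with Lucas' theorem (valid since $m_{i-1} \leq p-1$), gives $N_i^{\,m_{i-1}} = A_{p^{i-1}}^{\,m_{i-1}} = (m_{i-1}!)\, A_{m_{i-1} p^{i-1}}$, so the coefficient simplifies to $\prod_{i=1}^r A_{m_{i-1} p^{i-1}}$. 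A further application of $A_a A_b = \binom{a+b}{a} A_{a+b}$, again via Lucas' theorem — the base-$p$ digits occupy disjoint positions, so every accrued binomial factor is $\equiv 1 \pmod p$ — collapses this product to $A_m$. Hence $\prod_{i=1}^r \Exp(T^{p^{i-1}} N_i) = \sum_m A_m T^m = A(T)$, so $\pi(\rho) = A(T)$ and $\pi$ is surjective.

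The main obstacle is precisely this last coefficient-matching step: one must organize the base-$p$ bookkeeping so that the product of the truncated exponentials reproduces every coefficient matrix $A_m$, which is where Lucas' theorem and the divided-power identities among the $A_m$ do the real work. The valuation estimate $\ord_p(c_i) = 1$ securing $N_i^p = O_n$, and the commutativity of the $N_i$, are comparatively routine once the relation $A_a A_b = \binom{a+b}{a} A_{a+b}$ is in hand.
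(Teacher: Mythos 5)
Your proof is correct, but it is genuinely more self-contained than the paper's. The paper's own proof of this lemma is essentially a one-line citation: it invokes \cite[Lemma 1.5]{Tanimoto 2019} for the decomposition $A(T) = \prod_{i = 1}^r \Exp(T^{p^{i - 1}} N_i)$ with $(N_1, \ldots, N_r) \in \cN_r(n, k)$, and then simply observes that $\pi(\rho_{N_1, \ldots, N_r}) = A(T)$. You share that skeleton --- produce the tuple $(N_1, \ldots, N_r)$ and feed it to $\rho_{N_1, \ldots, N_r}$ --- but instead of citing the structural result you reprove it from scratch: the divided-power relations $A_a A_b = \binom{a+b}{a} A_{a+b}$ obtained by comparing coefficients of $T^a {T'}^b$, the commutativity of the $A_m$, the valuation computation $\ord_p\bigl( (p^i)! / ((p^{i-1})!)^p \bigr) = 1$ (which indeed equals $\frac{p^i - 1}{p - 1} - p \cdot \frac{p^{i-1} - 1}{p - 1} = 1$ by Legendre's formula) securing $N_i^p = O_n$ for $N_i := A_{p^{i-1}}$, and the base-$p$ bookkeeping matching each coefficient of the product of truncated exponentials with $A_m$. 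I checked the delicate steps and they are sound: since the digits $m_{i-1}$ are at most $p - 1$, exactly one tuple of exponents contributes to each $T^m$ in the product; the multinomial version of Lucas' theorem gives $(j p^{i-1})! / ((p^{i-1})!)^j \equiv j! \pmod{p}$, which is exactly your identity $N_i^{\,j} = j! \, A_{j p^{i-1}}$; and because the digits of $m$ occupy disjoint base-$p$ positions, every binomial factor accrued in collapsing $\prod_{i} A_{m_{i-1} p^{i-1}}$ to $A_m$ is $\equiv 1 \pmod p$ (with the convention $A_m = O_n$ for $m > \deg A(T)$ keeping all intermediate relations valid). What the paper's citation buys is brevity and consistency with its reference; what your argument buys is a complete elementary proof --- in effect you have reproved Lemma 1.5 of \cite{Tanimoto 2019} inside the proof of this lemma, making the surjectivity statement independent of the earlier paper.
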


\begin{proof}

Let $A(T)$ be an exponential matrix of $\Mat(n, k[T])^E$. 
We can express $A(T)$ as 
\[
 A(T) = \prod_{i = 1}^r \Exp(T^{p^{i - 1}} \, N_i) 
\]
for some $r \geq 1$ and $(N_1, \ldots, N_r) \in \cN_r(n, k)$ (see \cite[Lemma 1.5]{Tanimoto 2019}). 
So, $\rho_{N_1, \ldots , N_r} \in \E_r(n, k)$ and 
\begin{align*}
\pi(\rho_{N_1, \ldots , N_r}) 
 =  
\pi_r(\rho_{N_1, \ldots , N_r}) = A_{\rho_{N_1, \ldots , N_r}}(T) = A(T) . 
\end{align*}
\end{proof}

%Subsubsection
\subsubsection{$l(\rho)$, $\rho_{\rm min}$}

Let $\rho \in \E_{\geq 0}(n, k)$. 
If $\rho \in \E_r(n, k)$ for some $r \geq 1$, 
there exists a unique element $(N_1, \ldots, N_r)$ of $\cN_r(n, k)$ 
such that $\rho = \rho_{N_1, \ldots, N_r}$. 
Let 
\[
 l(\rho)
  := 
  \left\{
  \begin{array}{l @{\qquad} l}
  \min\{\,
  i \in \Z_{\geq 0} \mid  
 \, \text{$N_j = O_n$ for all $j \geq i + 1$} \, \} 
  & \text{ if \quad $r \geq 1$} , \\
%%%%% 
 0 
  & \text{ if \quad $r = 0 $} . 
  \end{array}
  \right. 
\]
We can define a homomorphism $\rho_\rmin : (\Z/p\Z)^{l(\rho)} \to \GL(n, k)$ as 
\[
 \rho_\rmin := 
 \left\{
 \begin{array}{l @{\qquad} l}
 \rho_{N_1, \ldots, N_{l(\rho)}} & \text{ if \quad $l(\rho) \geq 1$}, \\
 \rho_0 & \text{ if \quad $l(\rho) = 0$} . 
 \end{array}
\right.
\]
Clearly, the equality $l(\rho) = 0$ holds true if and only if $\rho$ is trivial.

\begin{lem}
For any $\rho \in \E_{\geq 0}(n, k)$, we have 
\[
 \pi(\rho) = \pi(\rho_\rmin) . 
\]
\end{lem}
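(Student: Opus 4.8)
The plan is to unwind both sides of the claimed equality into explicit products of exponential factors and to observe that the two products differ only by factors equal to the identity. First I would dispose of the trivial case: if $\rho = \rho_0$ (equivalently $\rho \in \E_0(n, k)$), then $\pi(\rho) = I_n$ by definition of $\pi_0$, while $l(\rho) = 0$ forces $\rho_\rmin = \rho_0$ and hence $\pi(\rho_\rmin) = \pi_0(\rho_0) = I_n$; so both sides agree. I may therefore assume $\rho \in \E_r(n, k)$ with $r \geq 1$, and write $\rho = \rho_{N_1, \ldots, N_r}$ for the unique tuple $(N_1, \ldots, N_r) \in \cN_r(n, k)$ with this property (Lemma 2.1). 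By the definition of $\pi_r$ we then have
\[
 \pi(\rho) = \prod_{i = 1}^r \Exp(T^{p^{i - 1}} N_i).
\]

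The key observation is that for every index $i$ with $l(\rho) < i \leq r$ one has $N_i = O_n$, directly from the definition of $l(\rho)$ as the least integer such that $N_j = O_n$ for all $j \geq l(\rho) + 1$. Consequently $\Exp(T^{p^{i - 1}} N_i) = \Exp(O_n) = I_n$ for each such $i$, since only the degree-zero term of the exponential survives. These trailing factors may therefore be deleted from the product without changing its value, giving
\[
 \pi(\rho) = \prod_{i = 1}^{l(\rho)} \Exp(T^{p^{i - 1}} N_i),
\]
valid also when $l(\rho) = 0$, in which case the product is empty and equals $I_n$.

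It then remains to identify the right-hand side with $\pi(\rho_\rmin)$. If $l(\rho) \geq 1$, then $\rho_\rmin = \rho_{N_1, \ldots, N_{l(\rho)}} \in \E_{l(\rho)}(n, k)$, and applying the definition of $\pi_{l(\rho)}$ to this element returns exactly $\prod_{i = 1}^{l(\rho)} \Exp(T^{p^{i - 1}} N_i)$, as required. If instead $l(\rho) = 0$, then $\rho_\rmin = \rho_0$ and $\pi(\rho_\rmin) = I_n$, matching the empty product above. I do not anticipate a genuine obstacle here: the whole content is the remark that a vanishing matrix $N_i$ contributes the identity factor $\Exp(O_n) = I_n$, and the only care needed is the bookkeeping that reconciles the $l(\rho) = 0$ convention $\rho_\rmin = \rho_0$ with the value $I_n$ prescribed by $\pi_0$.
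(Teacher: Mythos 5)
Your proof is correct and takes essentially the same route as the paper: both arguments truncate the product $\prod_{i=1}^{r} \Exp(T^{p^{i-1}}\,N_i)$ at index $l(\rho)$ because the trailing matrices $N_i$ with $i > l(\rho)$ vanish and contribute identity factors, and both treat the trivial case separately via $\rho_\rmin = \rho_0$ and $\pi_0(\rho_0) = I_n$. Your handling of the degenerate cases (including $\rho \in \E_r(n,k)$ with $r \geq 1$ but $l(\rho) = 0$, via the empty product) is if anything slightly more explicit than the paper's.
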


\begin{proof}
If $l(\rho) \geq 1$, we can express $\rho$ as 
$\rho = \rho_{N_1, \ldots, N_r}$ for some $r \geq 1$ and $(N_1, \ldots, N_r) \in \cN_r(n, k)$. 
We have $(N_1, \ldots, N_r) = (N_1, \ldots, N_{l(\rho)}, O_n, \ldots , O_n )$, 
which implies 
\[
 \pi(\rho) 
 = A_{\rho_{N_1, \ldots, N_r}}(T) 
 = \prod_{i = 1}^r \Exp(T^{p^{i - 1}} \, N_i) 
 =  \prod_{i = 1}^{l(\rho)} \Exp(T^{p^{i - 1}} \, N_i) 
 = A_{\rho_{N_1, \ldots, N_{l(\rho)}}}(T) 
 = \pi(\rho_\rmin) . 
\]
If $l(\rho) = 0$, then $\rho$ is trivial. So, $\pi(\rho) = I_n$. 
Since $\rho_\rmin = \rho_0$, we have $\pi(\rho_\rmin) = I_n$. 
\end{proof}

%Subsection 
\subsection{Proof of Theorem 0.2}

Let 
\[
\E_{\rmin}(n, k) := \{ \, \rho \in \E_{\geq 0}(n, k) \mid \rho = \rho_\rmin \,\} . 
\]
Clearly, $\rho_0 \in \E_\rmin(n, k)$. 
Let $\pi|_{\E_{\rmin}(n, k)} : \E_{\rmin}(n, k) \to \Mat(n, k[T])^E$ be the restriction 
of $\pi : \E_{\geq 0}(n, k) \to \Mat(n, k[T])^E$ to the subset $\E_{\rmin}(n, k)$ of $\E_{\geq 0}(n, k)$. 

\begin{claim}
$\pi|_{\E_{\rmin}(n, k)}$ is a bijection. 
\end{claim}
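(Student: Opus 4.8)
The plan is to get surjectivity essentially for free from the results already established and to concentrate all the work on injectivity, which I would prove by showing that the defining tuple $(N_1,\ldots,N_r)$ of a minimal representation is reconstructible from its image under $\pi$. Surjectivity I would dispatch first: given any $A(T)\in\Mat(n,k[T])^E$, Lemma 2.2 furnishes some $\rho\in\E_{\geq 0}(n,k)$ with $\pi(\rho)=A(T)$, and then $\rho_{\rmin}$ lies in $\E_{\rmin}(n,k)$ and satisfies $\pi(\rho_{\rmin})=\pi(\rho)=A(T)$ by Lemma 2.3. Thus the restriction $\pi|_{\E_{\rmin}(n,k)}$ is onto, and everything reduces to injectivity.

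For injectivity the key device I would use is a coefficient-extraction argument. Writing $\rho=\rho_{N_1,\ldots,N_r}$ with $r\geq 1$ and $(N_1,\ldots,N_r)\in\cN_r(n,k)$, the hypotheses $N_i^p=O_n$ and $N_i N_j=N_j N_i$ let me expand each factor as the finite sum $\Exp(T^{p^{i-1}}N_i)=\sum_{j=0}^{p-1}(j!)^{-1}T^{jp^{i-1}}N_i^{\,j}$ and then multiply the commuting factors together into
\[
\pi(\rho)=\sum_{0\leq j_1,\ldots,j_r\leq p-1}\frac{T^{\sum_i j_i p^{i-1}}}{\prod_i j_i!}\,\prod_{i=1}^r N_i^{\,j_i}.
\]
The crucial point is that the exponents $\sum_i j_i p^{i-1}$ are pairwise distinct as $(j_1,\ldots,j_r)$ ranges over $\{0,\ldots,p-1\}^r$, by uniqueness of base-$p$ expansions, so the monomial $T^{p^{i-1}}$ can arise only from the digit vector with $j_i=1$ and all other entries $0$. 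Hence the coefficient of $T^{p^{i-1}}$ in $\pi(\rho)$ is exactly $N_i$ for $1\leq i\leq r$, and is $O_n$ for every $i>r$. In short, $\pi(\rho)$ remembers each individual $N_i$.

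With this recovery principle in hand, injectivity becomes bookkeeping governed by the minimality condition. Given $\rho,\rho'\in\E_{\rmin}(n,k)$ with $\pi(\rho)=\pi(\rho')=:A(T)$, say $\rho=\rho_{N_1,\ldots,N_r}$ and $\rho'=\rho_{N'_1,\ldots,N'_{r'}}$ of lengths $r=l(\rho)$ and $r'=l(\rho')$, and assuming without loss of generality $r\leq r'$, the extraction forces $N_i=N'_i$ for $1\leq i\leq r$ (both equal the coefficient of $T^{p^{i-1}}$ in $A(T)$) and $N'_i=O_n$ for $r<i\leq r'$. But $\rho'=\rho'_{\rmin}$ means $N'_{r'}\neq O_n$ whenever $r'\geq 1$, so $r'>r$ is impossible; hence $r=r'$ and the tuples coincide, giving $\rho=\rho'$. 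The degenerate case $r=0$, where $\pi(\rho)=I_n$ has all coefficients of $T^{p^{i-1}}$ equal to $O_n$, is handled identically and forces $r'=0$, i.e. $\rho=\rho'=\rho_0$.

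I expect the only genuine obstacle to be the coefficient extraction, and specifically the justification that the staircase of exponents $T,T^p,T^{p^2},\ldots$ separates the successive matrices $N_i$. This rests squarely on the commutativity of the $N_i$, so that the product of the exponential factors expands unambiguously, together with the uniqueness of base-$p$ digits. Once that is in place, the identification of lengths via $\rho=\rho_{\rmin}$ and the conclusion are routine.
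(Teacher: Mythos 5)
Your proof is correct and follows the paper's skeleton: surjectivity is obtained exactly as in the paper (Lemma 2.2 gives a preimage, Lemma 2.3 replaces it by its minimal truncation), and injectivity reduces to showing that $\pi(\rho)$ determines the defining tuple, after which minimality ($N_{r}\neq O_n$ when $r=l(\rho)\geq 1$) pins down the lengths, with the trivial case $l(\rho)=0$ handled separately --- the same case split as the paper's. Where you genuinely diverge is at the tuple-recovery step, which is the heart of injectivity. The paper simply asserts that $A_{\rho_1}(T)=A_{\rho_2}(T)$ forces $(N_1,\ldots,N_r)=(M_1,\ldots,M_s,O_n,\ldots,O_n)$, leaning implicitly on the uniqueness of the factorization $A(T)=\prod_{i=1}^{r}\Exp(T^{p^{i-1}}N_i)$ that accompanies the existence statement it cites from Tanimoto 2019, Lemma 1.5. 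You instead prove this recovery from scratch: since $N_i^p=O_n$, each factor is the truncated sum $\sum_{j=0}^{p-1}(j!)^{-1}T^{jp^{i-1}}N_i^{\,j}$ (with $j!$ invertible in characteristic $p$ for $j\leq p-1$), commutativity of the $N_i$ lets the product expand unambiguously, and uniqueness of base-$p$ digit vectors $(j_1,\ldots,j_r)\in\{0,\ldots,p-1\}^r$ makes the exponents $\sum_i j_i p^{i-1}$ pairwise distinct, so the coefficient of $T^{p^{i-1}}$ in $\pi(\rho)$ is exactly $N_i$ for $i\leq r$ and $O_n$ for $i>r$ (all exponents being at most $p^r-1<p^{i-1}$). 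This buys a self-contained argument independent of the external uniqueness result, and indeed something slightly stronger --- an explicit formula extracting each $N_i$ from the exponential matrix --- at the cost of a computation the paper dispatches by citation; the paper's version is terser but leaves the crucial implication unjustified on the page. Your bookkeeping from that point (equality of coefficients forces $N_i=N_i'$ for $i\leq r$ and $N_i'=O_n$ for $r<i\leq r'$, contradicting $N'_{r'}\neq O_n$ unless $r=r'$) is sound and matches the paper's conclusion.
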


\begin{proof}
The map $\pi|_{\E_{\rmin}(n, k)}$ is surjective (see Lemmas 2.2 and 2.3). 

We shall show that $\pi|_{\E_{\rmin}(n, k)}$ is injective by separating two cases. 
Let $\rho_1, \rho_2 \in \E_{\rmin}(n, k)$ and assume $\pi(\rho_1) = \pi(\rho_2)$. 
\medskip

\noindent
{\bf Case $l(\rho_1) = 0$.} 
In this case, $\rho_1$ is trivial and thereby $\pi(\rho_1) = I_n$. 
So, $\pi(\rho_2) = I_n$, and thereby $\rho_2$ is trivial. 
Thus $\rho_1 = \rho_2$. 
\medskip 

\noindent 
{\bf Case $l(\rho_1) \geq 1$.} 
From the above case, we know $l(\rho_2) \geq 1$. 
Let $r := l(\rho_1)$ and $s := l(\rho_2)$. 
So, we can express $\rho_1$ and $\rho_2$ as 
\begin{align*}
\left\{
\begin{array}{l @{\qquad} l} 
 \rho_1 = \rho_{N_1, \ldots, N_r}  & \text{ for some \;  $(N_1, \ldots, N_r) \in \cN_r(n, k)$} , \\
 \rho_2 = \rho_{M_1, \ldots, M_s} & \text{ for some \;  $(M_1, \ldots, M_s) \in \cN_s(n, k)$} .  
\end{array}
\right. 
\end{align*}
We may assume $r \geq s$. 
Since $A_{\rho_1}(T) = \pi(\rho_1) = \pi(\rho_2) =  A_{\rho_2}(T)$, we have 
\[
(N_1, \ldots, N_r) = (M_1, \ldots, M_s, O_n, \ldots, O_n) . 
\] 
Since $N_r \ne O_n$, we must have $r = s$.  
Thus $\rho_1 = \rho_2$. 
\end{proof}

For $i \geq 1$, we denote by $\E_{\rmin}^{+i}(n, k)$ the set of all homomorphisms 
\[
\rho_{N_1, \ldots, N_{r + i}} : (\Z/p\Z)^{r + i} \to \GL(n, k) , \qquad
  (N_1, \ldots, N_{r + i}) \in \cN_{r + i}(n, k) 
\] 
such that the following conditions (1) and (2) hold true: 
\begin{enumerate}[label = {\rm (\arabic*)}]
\item $\rho_{N_1, \ldots, N_r} \in \E_{\rmin}(n, k)$. 

\item $N_{r + j} = O_n$ for all $1 \leq j \leq i$. 
\end{enumerate}

\begin{claim}
We have 
\[
\E_{\geq 0}(n, k) = \coprod_{i \geq 0} \E_{\rmin}^{+i}(n, k) , 
\] 
where $\E_\rmin^{+0}(n, k) := \E_\rmin(n, k)$. 
\end{claim}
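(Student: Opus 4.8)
The plan is to reduce the claimed disjoint-union decomposition to a single numerical characterization of membership in each piece. To every $\rho \in \E_{\geq 0}(n, k)$ there are attached two invariants: the unique integer $r' \geq 0$ with $\rho \in \E_{r'}(n, k)$, which is well-defined precisely because $\E_{\geq 0}(n, k) = \coprod_{r \geq 0} \E_r(n, k)$ is itself a disjoint union, and the integer $l(\rho)$ introduced above. I would first record that $l(\rho) \leq r'$ always: when $\rho \in \E_{r'}(n, k)$ the condition ``$N_j = O_n$ for all $j \geq r' + 1$'' holds vacuously, so the minimum defining $l(\rho)$ is at most $r'$. Hence $i := r' - l(\rho)$ is a well-defined non-negative integer attached to each $\rho$.

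The key step I would isolate is the equivalence
\[
\rho \in \E_{\rmin}^{+i}(n, k) \quad \Longleftrightarrow \quad i = r' - l(\rho).
\]
To prove it I would unwind the definition of $\E_{\rmin}^{+i}(n, k)$: writing $\rho = \rho_{N_1, \ldots, N_{r'}}$ with $(N_1, \ldots, N_{r'}) \in \cN_{r'}(n, k)$, membership in $\E_{\rmin}^{+i}(n, k)$ means precisely that the first $r' - i$ matrices form an element $\rho_{N_1, \ldots, N_{r' - i}}$ of $\E_{\rmin}(n, k)$ while $N_{r' - i + 1} = \cdots = N_{r'} = O_n$. By the definitions of $l(\rho)$ and $\E_{\rmin}(n, k)$, the first condition forces $N_{r' - i} \neq O_n$ when $r' - i \geq 1$, and is automatic when $r' - i = 0$ since then $\rho_{N_1, \ldots, N_0} = \rho_0 \in \E_{\rmin}(n, k)$; combined with the vanishing of the trailing matrices this says exactly that the last nonzero index is $r' - i$, i.e. $l(\rho) = r' - i$. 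Running the argument in reverse gives the converse.

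Granting this equivalence, the decomposition follows at once, using also that each $\E_{\rmin}^{+i}(n, k)$ consists of homomorphisms from $(\Z/p\Z)^{r' } \subseteq \coprod_{r} (\Z/p\Z)^{r}$, hence lies inside $\E_{\geq 0}(n, k)$. For the covering, any $\rho$ lies in $\E_{\rmin}^{+i}(n, k)$ for $i := r' - l(\rho) \geq 0$. For disjointness, if $\rho \in \E_{\rmin}^{+i}(n, k) \cap \E_{\rmin}^{+i'}(n, k)$ then the equivalence gives $i = r' - l(\rho) = i'$, so pieces with distinct indices are disjoint. The convention $\E_{\rmin}^{+0}(n, k) = \E_{\rmin}(n, k)$ is consistent, since $i = 0$ corresponds to $l(\rho) = r'$, which is exactly the condition $\rho = \rho_\rmin$ defining $\E_{\rmin}(n, k)$.

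I expect the only real friction to be the bookkeeping around the degenerate branches — the trivial homomorphism $\rho_0$ and, more generally, the case $l(\rho) = 0$ (equivalently $r' - i = 0$), where the ``first $r' - i$ matrices'' form the empty tuple yielding $\rho_0$. Keeping the two cases $r' - i \geq 1$ and $r' - i = 0$ separate throughout the proof of the displayed equivalence should make the argument routine, with no substantive difficulty beyond unwinding the definitions.
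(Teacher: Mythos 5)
Your proposal is correct and takes essentially the same route as the paper: the paper's proof likewise assigns to each $\rho = \rho_{N_1, \ldots, N_r}$ the index $i := r - l(\rho)$ to obtain the covering, and dismisses disjointness with a ``clearly''. Your explicit equivalence $\rho \in \E_{\rmin}^{+i}(n, k) \Longleftrightarrow i = r' - l(\rho)$, together with the careful treatment of the degenerate case $r' - i = 0$, simply spells out the bookkeeping behind that assertion.
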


\begin{proof}
Choose any element $\rho$ of $\E_{\geq 0}(n, k)$. 
If $\rho \in \E_r(n, k)$ for some $r \geq 1$, 
we can express $\rho$ as $\rho = \rho_{N_1, \ldots, N_r}$ for some $(N_1, \ldots, N_r) \in \cN_r(n, k)$. 
Let $i := r - l(\rho)$. We have $\rho \in \E_{\rmin}^{+i}(n, k)$. Thus $\E_{\geq 0}(n, k) \subset \bigcup_{i \geq 0} \E_{\rmin}^{+i}(n, k)$. 
Clearly, $\E_{\rmin}^{+i}(n, k) \cap \E_{\rmin}^{+j}(n, k)= \emptyset$ 
for all integers $i, j \geq 0$ with $i \ne j$. 
\end{proof}

\begin{claim}
There exists a natural one-to-one correspondence between the sets $\E_{\geq 0}(n, k)$ and $\E_{\rmin}(n, k) \times \Z_{\geq 0}$, 
i.e.,  
\[
\E_{\geq 0}(n, k) \cong \E_{\rmin}(n, k) \times \Z_{\geq 0} . 
\] 
\end{claim}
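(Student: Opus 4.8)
The plan is to combine Claim 2.6 with the elementary fact that, for each fixed $i$, appending $i$ trailing zero matrices sets up a bijection between $\E_{\rmin}(n, k)$ and $\E_{\rmin}^{+i}(n, k)$. Claim 2.6 already provides the decomposition
\[
\E_{\geq 0}(n, k) = \coprod_{i \geq 0} \E_{\rmin}^{+i}(n, k),
\]
so the remaining task is to identify each summand $\E_{\rmin}^{+i}(n, k)$ with $\E_{\rmin}(n, k)$ compatibly with this coproduct; reading off the summand index $i$ then supplies the second coordinate in $\Z_{\geq 0}$.

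First I would fix $i \geq 0$ and define a map $\beta_i : \E_{\rmin}^{+i}(n, k) \to \E_{\rmin}(n, k)$ by deleting the last $i$ matrices: given $\rho_{N_1, \ldots, N_{r + i}} \in \E_{\rmin}^{+i}(n, k)$, put $\beta_i(\rho_{N_1, \ldots, N_{r + i}}) := \rho_{N_1, \ldots, N_r}$, which lies in $\E_{\rmin}(n, k)$ by the very definition of $\E_{\rmin}^{+i}(n, k)$. This is well-defined: Lemma 2.1 guarantees that the tuple $(N_1, \ldots, N_{r + i}) \in \cN_{r + i}(n, k)$ is uniquely recovered from the homomorphism, and since membership in $\E_{r + i}(n, k)$ fixes $r + i$ while $i$ is prescribed, both $r$ and the truncated homomorphism $\rho_{N_1, \ldots, N_r}$ are unambiguous. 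The candidate inverse appends $i$ copies of $O_n$, sending $\rho_{N_1, \ldots, N_r} \in \E_{\rmin}(n, k)$ to $\rho_{N_1, \ldots, N_r, O_n, \ldots, O_n} \in \E_{\rmin}^{+i}(n, k)$; that this target lies in $\E_{\rmin}^{+i}(n, k)$ is immediate from the definition, and the two maps are visibly mutually inverse. The trivial case $r = 0$, where $\rho_{N_1, \ldots, N_r} = \rho_0$ and appending zeros produces the trivial homomorphism on $(\Z/p\Z)^i$, is covered by the same description.

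Assembling over all $i$, the bijections $\beta_i$ glue to a bijection
\[
\coprod_{i \geq 0} \E_{\rmin}^{+i}(n, k) \;\longrightarrow\; \coprod_{i \geq 0} \E_{\rmin}(n, k) = \E_{\rmin}(n, k) \times \Z_{\geq 0},
\]
where the last identification records in the $\Z_{\geq 0}$-coordinate the summand from which an element originates. Composing with Claim 2.6 yields the desired one-to-one correspondence $\E_{\geq 0}(n, k) \cong \E_{\rmin}(n, k) \times \Z_{\geq 0}$; explicitly it sends $\rho \in \E_r(n, k)$ to $(\rho_{\rmin}, \, r - l(\rho))$, with inverse sending $(\sigma, i)$ to $\sigma$ with $i$ trailing zero matrices appended.

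The substantive input is entirely Claim 2.6, which is what sorts $\E_{\geq 0}(n, k)$ into disjoint pieces indexed by the number of trailing zero matrices. Once that is granted, the only point demanding care is the well-definedness of the truncation maps $\beta_i$, and for this I would rely on the uniqueness half of Lemma 2.1 so that deleting trailing zeros is independent of any representing tuple. I anticipate no deeper obstacle, as the whole argument reduces to bookkeeping of trailing zero matrices.
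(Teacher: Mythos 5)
Your proposal is correct and takes essentially the same route as the paper: the paper's own proof likewise identifies each piece $\E_{\rmin}^{+i}(n, k)$ with $\E_{\rmin}(n, k)$ by appending or deleting trailing zero matrices and then invokes the disjoint decomposition $\E_{\geq 0}(n, k) = \coprod_{i \geq 0} \E_{\rmin}^{+i}(n, k)$, which in the paper's numbering is Claim 2.5 (not 2.6, as you cite it). The only difference is one of detail: you spell out the well-definedness of the truncation maps $\beta_i$ via the uniqueness part of Lemma 2.1, a point the paper leaves implicit.
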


\begin{proof}
For any $i \geq 1$, there exists a natural one-to-one correspondence between 
the sets $\E_{\rmin}^{+i}(n, k)$ and $\E_{\rmin}(n, k)$.
Thus we have the desired bijection (see Claim 2.5). 

\end{proof}

We can obtain from Claims 2.6 and 2.4 the natural bijection 
\[
 \E_{\geq 0}(n, k) \cong \Mat(n, k[T])^E \times \Z_{\geq 0} . 
\]
Now, we complete the proof of Theorem 0.2. 
\vspace{1em}

From the above proof, we have the following commutative diagram, 
where $p_1 : \Mat(n, k[T])^E \times \Z_{\geq 0} \to \Mat(n, k[T])^E$ is 
the projection onto the first component: 
\[
\xymatrix@R=36pt@C=36pt@M=6pt{
 \E_{\geq 0}(n, k) \ar[r]^(.4)\cong \ar[d]_\pi 
 & \Mat(n, k[T])^E \times \Z_{\geq 0}  \ar[ld]^{p_1} \\
 \Mat(n, k[T])^E
  & 
}
\]
We can conclude that $\pi$ is a trivial $\Z_{\geq 0}$-fibration.

\end{document}